\newtheorem{theorem}{Theorem}[section]
\newtheorem{lemma}[theorem]{Lemma}
\theoremstyle{definition}
\newtheorem{example}[theorem]{Example}
\theoremstyle{remark}
\newtheorem{remark}[theorem]{Remark}
\numberwithin{equation}{section}
\begin{document}
	
	\setcounter{page}{1}
	
	
	\begin{center}
		{\Large \textbf{\ On $\text{\texthtq}$-statistical
				approximation of wavelets aided Kantorovich \textit{\text{\texthtq}}-Baskakov operators}} \bigskip
		
		\href{https://orcid.org/0000-0002-2566-3498}{\textbf{Mohammad Ayman-Mursaleen%
		}}$^{1,2}$, \href{https://orcid.org/0000-0002-9184-8941}{\textbf{Bishnu P.
				Lamichhane }}$^{1}$, \href{https://orcid.org/0000-0002-1217-963X}{%
			\textbf{Adem Kiliçman}}$^{2,\ast}$ and 
		\href{https://orcid.org/0000-0001-8614-8281}{\textbf{%
				Norazak Senu}}$^{2,3}$\\[0pt]
		
		\bigskip $^{1}$School of Information and Physical Sciences, \\[0pt]
		\textbf{The University of Newcastle}, University Drive, Callaghan, New South
		Wales 2308, \textbf{Australia}\\[0pt]
		$^{2}$Department of Mathematics and Statistics, Faculty of Science, \\[0pt]
		\textbf{Universiti Putra Malaysia}, 43400 UPM Serdang, Selangor, \textbf{%
			Malaysia}\\[0pt]
		$^{3}$Institute for Mathematical Research (INSPEM), \\[0pt]
		\textbf{Universiti Putra Malaysia}, 43400 UPM Serdang, Selangor, \textbf{%
			Malaysia}\\[0pt]
		
		\bigskip
		
		\href{mailto:mohdaymanm@gmail.com}{mohdaymanm@gmail.com}, \href{mailto:mohammad.mursaleen@uon.edu.au}%
		{mohammad.mursaleen@uon.edu.au}, \href{mailto:mursaleen.ayman@student.upm.edu.my}%
		{mursaleen.ayman@student.upm.edu.my}\\[0pt]
		\href{mailto:bishnu.lamichhane@newcastle.edu.au}{%
			bishnu.lamichhane@newcastle.edu.au}\\[0pt]
		
		\href{mailto:akilicman@yahoo.com}{akilicman@yahoo.com}, \href{mailto:akilic@upm.edu.my}%
		{akilic@upm.edu.my}\\[0pt]
		\href{mailto:norazak@upm.edu.my}{norazak@upm.edu.my}\\[0pt]
		
		$^{*}$Corresponding author \bigskip \bigskip
		
		\textbf{Abstract}
	\end{center}
	
	\parindent=8mm {\footnotesize {The aim of this research is to examine various statistical approximation properties with respect to Kantorovich \textit{\text{\texthtq}}-Baskakov operators using wavelets. We discuss and investigate a weighted statistical approximation employing a Bohman-Korovkin type theorem as well as a statistical rate of convergence applying a weighted modulus of smoothness $\omega_{\rho_{\alpha}}$ correlated with the space  $B_{\rho\alpha}(\mathbb{R_{+}})$ and Lipschitz type maximal functions. Both topics are covered in the article.}}\newline
	
	{\footnotesize \emph{Keywords and phrases}: \textit{\text{\texthtq}}-integers, Baskakov-Kantorovich operators; Baskakov operators, \textit{\text{\texthtq}}- -Baskakov-Kantorovich operators; Baskakov operators; \textit{\text{\texthtq}} Wavelets; Modulus of Smoothness, Haar basis, Statistical Convergence, Weighted function
	}

	{\footnotesize \emph{AMS Subject Classification (2010):} Primary 41A25,
		41A36; Secondary 33C45.}
	
	\section{\textbf{\ Introduction and preliminaries}}
	In 1995, Agratini \cite{agratini} introduced a class of Sz\'{a}sz-type operators by means of compactly supported wavelets of Daubechies.
	Later on in 1997, Gonska and Zhou \cite{gon} used the Daubechies'
	compactly-supported wavelets to establish a new class of Baskakov-type
	operators. This technique of employing wavelets in
	modifying the classical operators is very useful which provides a tool to
	achieve the local information of approximation by such operators. In \cite%
	{nasir}, Nasiruzzaman \textit{et al} further modified
	the operators of Gonska and Zhou \cite{gon} by defining their $q$-analog to
	get a better rate of convergence. In this scholarly article, our focus is to delve deeper into the various approximation properties exhibited by the operators described in \cite{nasir}. Our proposed study aims to further enhance our understanding of these operators and their potential applications.

	\parindent8mm Note that that the Bernstein polynomials  \cite{bns} converge uniformly to the value $\text{\textg}(x)$ for every continuous function $\text{\textg}$, where $x$ is any real value between 0 and 1. The following defines the Bernstein polynomials:
	\begin{equation}
		\left( \mathcal{B}_{r,s }^{\ast } \text{\textg}\right) (x)=\sum_{s =0}^{r}\binom{r}{s }%
		x^{s}(1-x)^{r-s }\text{\textg}\left( \frac{s }{r}\right) ,  \label{bnst}
	\end{equation}%
	where $\binom{r}{i}$ refers to the binomial coefficients.

	The Sz\'{a}sz \cite{sbbl4} as well as Baskakov \cite{bsk} operators were formed in approximating the continuous functions which were defined with respect to the unbounded interval $[0,\infty ).$ Here, the Baskakov operators are written as
	
	\begin{equation*}
		\left( \mathcal{B}_{\text{\textlonglegr},s } \text{\textg}\right)
		(x)=\sum_{s =0}^{\infty }\binom{\text{\textlonglegr}+s -1}{ s }\frac{x^{s}}{%
			(1+x)^{\text{\textlonglegr}+s }}\text{\textg}\left( \frac{s }{\text{%
				\textlonglegr}}\right) .
	\end{equation*}%
	Bernstein operators were modified by Kantorovich \cite{kant} and were christened Bernstein-Kantorovich operators. These operators are utilized in approximating the functions of broader classes as opposed to continuous functions. Moreover, the following are the operators that define Bernstein-Kantorovich operators given:
	
	\begin{equation}  \label{brk}
		\left( \mathcal{K}_{\text{\textlonglegr},s }\text{\textg}\right) (x)=(\text{%
			\textlonglegr}+1)\sum_{s =0}^{\text{\textlonglegr}}\binom{\text{\textlonglegr%
		}}{s } x^{s}(1-x)^{\text{\textlonglegr}-s }\int_{\frac{s }{\text{%
					\textlonglegr}+1 }}^{\frac{s +1}{\text{\textlonglegr}+1 }}\text{\textg}(%
		\text{\textrtailt})\mathrm{d}\text{\textrtailt},
	\end{equation}%
	for functions $\text{\textg}\in L_{p}[0,1]$ ($1\leq p<\infty $).
	
	To determine the $L_{p}$-approximation, Ditzian and Totik \cite{d} expressed Kantorovich modification with respect to Baskakov operators, which is called the Baskakov-Kantorovich operators written as
	\begin{equation}
		\left( \mathcal{K}_{m,\text{\textrtaill} }~\text{\textg}\right) (x)=m\sum_{%
			\text{\textrtaill} =0}^{\infty }\binom{m+\text{\textrtaill} -1}{\text{%
				\textrtaill} }\frac{x^{\text{\textrtaill} }}{(1+x)^{m+\text{\textrtaill} }}%
		~\int_{\frac{\text{\textrtaill} }{m}}^{\frac{\text{\textrtaill} +1}{m}}\text{%
			\textg}\left( \text{\textrtailt}\right) \mathrm{d}\text{\textrtailt}.
		\label{k1}
	\end{equation}
	
	The $\text{\texthtq}$-calculus application appeared as a relatively new research field in the approximation theory. Here, the first $\text{\texthtq}$-analogue of the famous Bernstein polynomials was established by Lupa\c{s} \cite{sbbl2} by employing the concept of $\text{\texthtq}$-integers. On the other hand, in 1997, Phillips \cite{philip} took into consideration a different $\text{\texthtq}$-analogue with respect to the classical Bernstein polynomials. Subsequently, numerous researchers investigated the $\text{\texthtq}$-generalizations with regard to a variety of operators by examining their approximation properties. For instance, the $\text{\texthtq}$-variant with respect to Baskakov operators \cite{qbsk} may be written as
	\begin{equation*} \label{1.1}
		\left( \mathcal{V}_{m,\text{\textrtaill} ,\text{\texthtq}}~\text{\textg}%
		\right) (x)=\sum_{\text{\textrtaill} =0}^{\infty }B_{m,\text{\textrtaill} ,%
			\text{\texthtq}}(x)~\text{\textg}\left( \frac{[\text{\textrtaill} ]_{\text{%
					\texthtq}}}{\text{\texthtq}^{\text{\textrtaill} -1}[m]_{\text{\texthtq}}}%
		\right) ,
	\end{equation*}%
	where
	\begin{equation*}
		B_{m,\text{\textrtaill} ,\text{\texthtq}}(x)=\left[
		\begin{array}{c}
			m+\text{\textrtaill} -1 \\
			\text{\textrtaill}%
		\end{array}%
		\right] _{\text{\texthtq}}\frac{x^{\text{\textrtaill} }}{(1+x)_{\text{%
					\texthtq}}^{m+\text{\textrtaill} }}\text{\texthtq}^{\frac{\text{\textrtaill}
				(\text{\textrtaill} -1)}{2}},
	\end{equation*}%
	while the $\text{\texthtq}$-Baskakov-Kantorovich operator \cite{gupta} are
	defined by
	
	\begin{equation}  \label{operator-1}
		\left(\mathcal{T}_{m,\text{\textrtaill} ,\text{\texthtq}} ~ \text{\textg}%
		\right)(x)= [m]_\text{\texthtq} \sum_{\text{\textrtaill} =0}^{\infty} \text{%
			\texthtq}^{\text{\textrtaill} -1}B_{m,\text{\textrtaill} ,\text{\texthtq}%
		}(x) \int_{\frac{\text{\texthtq}[\text{\textrtaill} ]_\text{\texthtq}}{[m]_%
				\text{\texthtq}}}^{\frac{[\text{\textrtaill} +1]_\text{\texthtq}}{[m]_\text{%
					\texthtq}}}\text{\textg}\left( \text{\texthtq}^{1-\text{\textrtaill} } \text{%
			\textrtailt}\right)\mathrm{d}_\text{\texthtq}\text{\textrtailt}.
	\end{equation}
	
	\begin{lemma}
		\label{Lemma 2.1} With respect to the test functions given by $e_{j}=\text{\textrtailt}^{j},~j=0,1,2$, it follows that
		\begin{eqnarray*}
			(1)\quad \left( \mathcal{V}_{m,\text{\textrtaill} ,\text{\texthtq}%
			}~e_{0}\right) (x) &=&1, \\
			(2)\quad \left( \mathcal{V}_{m,\text{\textrtaill} ,\text{\texthtq}%
			}~e_{1}\right) (x) &=&x, \\
			(3)\quad \left( \mathcal{V}_{m,\text{\textrtaill} ,\text{\texthtq}%
			}~e_{2}\right) (x) &=&x^{2}+\frac{x}{[m]_{\text{\texthtq}}}\left( 1+\frac{x}{%
				\text{\texthtq}}\right) .
		\end{eqnarray*}
	\end{lemma}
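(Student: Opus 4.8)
The plan is to deduce all three identities from two facts about the $q$-Baskakov basis $B_{m,\ell,q}$: the normalization $\sum_{\ell\ge 0}B_{m,\ell,q}(x)=1$, valid for every positive integer $m$, and the \emph{shift relation}
\[
\frac{[\ell]_q}{q^{\ell-1}[m]_q}\,B_{m,\ell,q}(x)=x\,B_{m+1,\ell-1,q}(x)\qquad(\ell\ge 1).
\]
I would prove the shift relation first, by a termwise comparison of the two sides: it is the combination of the $q$-binomial identity $\frac{[\ell]_q}{[m]_q}\left[\begin{array}{c}m+\ell-1\\ \ell\end{array}\right]_q=\left[\begin{array}{c}m+\ell-1\\ \ell-1\end{array}\right]_q=\left[\begin{array}{c}(m+1)+(\ell-1)-1\\ \ell-1\end{array}\right]_q$, the exponent identity $\binom{\ell}{2}-(\ell-1)=\binom{\ell-1}{2}$ (so that $q^{\binom{\ell}{2}}/q^{\ell-1}=q^{\binom{\ell-1}{2}}$), and the equality of denominators $(1+x)_q^{m+\ell}=(1+x)_q^{(m+1)+(\ell-1)}$, the leftover factor $x$ accounting for $x^{\ell}=x\cdot x^{\ell-1}$. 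For the normalization I would quote the corresponding property of the $q$-Baskakov operators from \cite{qbsk}; it can also be verified directly, the case $m=1$ telescoping from $B_{1,\ell,q}(x)=\frac{q^{\binom{\ell}{2}}x^{\ell}}{(1+x)_q^{\ell}}-\frac{q^{\binom{\ell+1}{2}}x^{\ell+1}}{(1+x)_q^{\ell+1}}$ and the general case following from the $q$-Pascal rule.

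Granting these, the three moments follow quickly. Item $(1)$ is the normalization. For $(2)$, the $\ell=0$ summand drops out because $[0]_q=0$, so the shift relation gives
\[
\bigl(\mathcal{V}_{m,\ell,q}\,e_1\bigr)(x)=\sum_{\ell\ge 1}\frac{[\ell]_q}{q^{\ell-1}[m]_q}B_{m,\ell,q}(x)=x\sum_{k\ge 0}B_{m+1,k,q}(x)=x.
\]
For $(3)$, one application of the shift relation together with the substitution $k=\ell-1$ gives $\bigl(\mathcal{V}_{m,\ell,q}\,e_2\bigr)(x)=x\sum_{k\ge 0}\frac{[k+1]_q}{q^{k}[m]_q}B_{m+1,k,q}(x)$; I would then split $[k+1]_q=[k]_q+q^{k}$. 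The $q^{k}$-part collapses, via the normalization at level $m+1$, to $\frac{x}{[m]_q}$, while the $[k]_q$-part equals $\frac{x}{q[m]_q}\sum_{k\ge 1}\frac{[k]_q}{q^{k-1}}B_{m+1,k,q}(x)$, and a second application of the shift relation (now at level $m+1$) reduces the remaining sum to $[m+1]_q\,x$, contributing $\frac{x^{2}[m+1]_q}{q[m]_q}$. Adding the two contributions and using $[m+1]_q=1+q[m]_q$ yields $x^{2}+\frac{x}{[m]_q}\bigl(1+\frac{x}{q}\bigr)$, which is the asserted value.

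Everything after the shift relation and the normalization is re-indexing of absolutely convergent series and routine $q$-integer algebra, so I expect the main obstacle to be bookkeeping rather than any conceptual point: keeping the Gaussian factors $q^{\binom{\ell}{2}}$ consistent under the index shifts $\ell\mapsto\ell-1$, and not conflating $[m]_q$ with $[m+1]_q$ through the two nested reductions needed for $(3)$. The one ingredient that genuinely has to be proved rather than merely rearranged is the normalization $\sum_{\ell}B_{m,\ell,q}(x)=1$; if it is not simply quoted, establishing it — either from the $q$-binomial theorem after a substitution that linearizes the Gaussian factor, or by induction on $m$ using the $q$-Pascal recursion — is the step I would treat most carefully.
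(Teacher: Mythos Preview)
Your argument is correct: the shift relation $\frac{[\ell]_q}{q^{\ell-1}[m]_q}\,B_{m,\ell,q}(x)=x\,B_{m+1,\ell-1,q}(x)$ together with the normalization $\sum_{\ell\ge 0}B_{m,\ell,q}(x)=1$ yields all three moments by exactly the reductions you describe, and the algebra in part~(3) via $[k+1]_q=[k]_q+q^{k}$ and $[m+1]_q=1+q[m]_q$ checks out line by line.

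As for comparison, the paper does not actually supply a proof of this lemma. It is stated immediately after the definition of the $q$-Baskakov operators and is treated as a known result, implicitly drawn from Aral and Gupta~\cite{qbsk}, where these operators and their moments were first established. Your write-up therefore fills in what the paper takes for granted; the shift-relation approach you outline is the standard device for computing Baskakov-type moments and is essentially how the result is derived in that source literature.
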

	
	\subsection{Basics of \text{\texthtq}-Calculus}
	
	The \text{\texthtq}-integer $[m]_{\text{\texthtq}}$, the $\text{\texthtq}$-factorial $%
	[m]_{\text{\texthtq}}!$ as well as the \text{\texthtq}-binomial coefficient are expressed as below (see
	\cite{bbl6}) :\newline
	\begin{align*}
		\lbrack m]_{\text{\texthtq}}& :=\left\{
		\begin{array}{ll}
			\frac{1-\text{\texthtq}^{m}}{1-\text{\texthtq}}, & \hbox{if~}\text{\texthtq}\in \mathbb{R}%
			^{+}\setminus \{1\} \\
			m, & \hbox{if~}\text{\texthtq}=1,%
		\end{array}%
		\right. \mbox{for $m\in \mathbb{N} $~and~$[0]_\text{\texthtq}=0$}, \\
		\lbrack m]_{\text{\texthtq}}!& :=\left\{
		\begin{array}{ll}
			\lbrack m]_{\text{\texthtq}}[m-1]_{\text{\texthtq}}\cdots \lbrack 1]_{\text{%
					\texthtq}}, & \hbox{$m\geq 1$,} \\
			1, & \hbox{$m=0$,}%
		\end{array}%
		\right. \\
		\left[
		\begin{array}{c}
			m \\
			\text{\textrtaill}%
		\end{array}%
		\right] _{\text{\texthtq}}& :=\frac{[m]_{\text{\texthtq}}!}{[\text{%
				\textrtaill} ]_{\text{\texthtq}}![m-\text{\textrtaill} ]_{\text{\texthtq}}!},
	\end{align*}%
	accordingly. Here, the \text{\texthtq}-analogue  with respect to $(1+x)^{m}$ is given by the polynomial
	\begin{equation*}
		(1+x)_{\text{\texthtq}}^{m}:=\left\{
		\begin{array}{ll}
			(1+x)(1+\text{\texthtq}x)\cdots (1+\text{\texthtq}^{m-1}x) & \quad
			m=1,2,3,\cdots \\
			1 & \quad n=0.%
		\end{array}%
		\right.
	\end{equation*}%
	%
	%
	%
	%
	%
	
	\noindent The Gauss binomial formula may be written as
	\begin{equation*}
		(x+a)_{\text{\texthtq}}^{m}=\sum\limits_{\text{\textrtaill} =0}^{m}\left[
		\begin{array}{c}
			m \\
			\text{\textrtaill}%
		\end{array}%
		\right] _{\text{\texthtq}}\text{\texthtq}^{\text{\textrtaill} (\text{%
				\textrtaill} -1)/2}a^{\text{\textrtaill} }x^{m-\text{\textrtaill} }.
	\end{equation*}
	
	On the other hand, the $\text{\texthtq}$-derivative $D_\text{\texthtq} \text{\textg}$ of a
	function $\text{\textg}$ may be written as
	
	\begin{equation*}
		(D_\text{\texthtq} \text{\textg})(x)= \frac{\text{\textg}(x)-\text{\textg}(%
			\text{\texthtq}x)}{(1-\text{\texthtq})x},~ x\neq 0,
	\end{equation*}%
	as well as $(D_\text{\texthtq} \text{\textg})(0)=\text{\textg}^{\prime }(0),$
	provided that $\text{\textg}^{\prime }(0)$ exists. If $\text{\textg}$ is
	differentiable, then
	
	\begin{equation*}
		\lim_{\text{\texthtq} \to 1}D_\text{\texthtq} \text{\textg}(x)= \lim_{\text{%
				\texthtq} \to 1}\frac{\text{\textg}(x)-\text{\textg}(\text{\texthtq}x)}{(1-%
			\text{\texthtq})x}=\frac{d \text{\textg}(x)}{dx}.
	\end{equation*}%
	For $m \geq 1$,
	\begin{equation*}
		D_\text{\texthtq} (1+x)_\text{\texthtq}^m= [m]_\text{\texthtq} (1+\text{%
			\texthtq}x)_\text{\texthtq}^{m-1},~ D_\text{\texthtq} \left(\frac{1}{ (1+x)_%
			\text{\texthtq}^m}\right)= -\frac{[m]_\text{\texthtq}}{(1+x)_\text{\texthtq}%
			^{m+1}},
	\end{equation*}
	
	\begin{equation*}
		D_{\text{\texthtq}}\bigg{(}\frac{u(x)}{v(x)}\bigg{)}=\frac{v(\text{\texthtq}%
			x)D_{\text{\texthtq}}u(x)-u(\text{\texthtq}x)D_{\text{\texthtq}}v(x)}{v(x)v(%
			\text{\texthtq}x)}.
	\end{equation*}%
	The $\text{\texthtq}$-Jackson definite integral is expressed by
	\begin{equation*}
		\int_{0}^{\infty /A}f(x)d_{\text{\texthtq}}x=(1-\text{\texthtq}%
		)\sum_{n=-\infty }^{\infty }f\left( \frac{\text{\texthtq}^{n}}{A}\right)
		\frac{\text{\texthtq}^{n}}{A}\qquad (A\in \mathbb{R}-\{0\}).
	\end{equation*}
	
	\subsection{$\text{\texthtq}$-Statistical convergence}
	
	The definition of $\text{\texthtq}$-analog with respect to Ces\`{a}ro matrix $C_{1}$ is not unique (see \cite%
	{akrh}, \cite{akbe}). Here, we may take into consideration the \text{\texthtq}-Ces\`{a}ro matrix, $%
	C_{1}(\text{\texthtq})=(c_{nk}^{1}(\text{\texthtq}^{k}))_{n,k=0}^{\infty }$ expressed by
	\begin{equation*}
		c_{nk}^{1}(\text{\texthtq}^{k})=%
		\begin{cases}
			\frac{\text{\texthtq}^{k}}{[n+1]_{\text{\texthtq}}}\text{ \ if }k\leq n, \\
			0\text{ \ \ \ \ \ \ \ otherwise}.%
		\end{cases}%
	\end{equation*}%
	which is regular with respect to $\text{\texthtq}\geq 1$.
	
	Suppose $\mathcal{K}\subseteq \mathbb{N}$ (the set of natural numbers). Therefore, $%
	\delta (\mathcal{K})=\lim_{r}\frac{1}{r}\#\{k\leq r~:~k\in \mathcal{K}\}$ is
	known as the asymptotic density with respect to $\mathcal{K}$, in which $\#$ resembles the
	cardinality of the enclosed set. Moreover, a sequence $\eta =(\eta _{k})$ is known as
	statistically convergent to the number $\mathfrak{s}$ provided that $\delta (\mathcal{K}%
	_{\varepsilon })=0$\ for every $\varepsilon >0$, in which $\mathcal{K}%
	_{\varepsilon }=\{k\leq r:|\eta _{k}-\mathfrak{s}|>\varepsilon \}$ (refer to \cite%
	{fas}).
	
	In the current years, Aktu\u{g}lu and Bekar \cite{akbe} determined \text{\texthtq}-density as well as \text{\texthtq}%
	-statistical convergence. The \text{\texthtq}-density may be expressed as
	\begin{equation*}
		\delta _{\text{\texthtq}}(\mathcal{K})=\delta _{C_{1}^{\text{\texthtq}}}(\mathcal{K})=\lim
		\inf_{n\rightarrow \infty }(C_{1}^{\text{\texthtq}}\chi _{\mathcal{K}})_{n}=\lim
		\inf_{n\rightarrow \infty }\sum_{k\in K}\frac{\text{\texthtq}^{k-1}}{[n]},\text{ }\text{\texthtq}\geq 1.
	\end{equation*}
	
	A sequence $\eta =(\eta _{k})$ is known to be \text{\texthtq}-statistically~convergent with respect to
	the number $\mathcal{L}$ provided that $\delta _{\text{\texthtq}}(\mathcal{K}_{\varepsilon })=0$,
	in which $\mathcal{K}_{\varepsilon }=\{k\leq n:|\eta _{k}-\mathcal{L}|\geq
	\varepsilon \}$ for every $\varepsilon >0$. In other words, for each $%
	\varepsilon >0,$.
	\begin{equation*}
		\lim_{n}\frac{1}{[n]}\#\{k\leq n:\text{\texthtq}^{k-1}|\eta _{k}-\mathcal{L}|\geq
		\varepsilon \}=0
	\end{equation*}%
	We may also write $St_{\text{\texthtq}}-\lim \eta _{k}=\mathcal{L}$.
	
	Provided that $\delta (\mathcal{K})=0$ for an infinite set $\mathcal{K}$, $\delta
	_{\text{\texthtq}}(\mathcal{K})=0,$. Therefore, statistical convergence \cite[Example 15]{fas}
	Implies that it is \text{\texthtq}-statistical~convergence but not conversely (refer to [Example 15%
	]\cite{akbe}).
	
	\section{\textbf{Wavelets aided $\text{\texthtq}$-Baskakov-Kantorovich operators}}
	
	We now recall several basic knowledge with respect to wavelets \cite{w1,w2}. Here, the wavelets notion denotes the set of functions of the form given by
	\begin{equation*}
		\Psi _{\mu ,\nu }(x)=\mu ^{-\frac{1}{2}}\Psi \left( \frac{x-\nu }{\mu }%
		\right) ~\mu >0,~\nu \in \mathbb{R},
	\end{equation*}%
	which are formed via translations and dilations with respect to a single function $\Psi $, which is called the mother wavelet or basic wavelet. Moreover, following the Franklin-Stro\"{m}berg theory, the constant $\mu $ may be substituted by $2^{i}$ while $\nu $ may be substituted by $%
	2^{i}\text{\textrtaill} $ having $i$ and $\text{\textrtaill} $ to be the integers. With respect to an arbitrary function $\text{\textg}\in L_{2}(\mathbb{R}),$ the wavelets have a crucial part in the orthonormal basis, in which the $g$ function is given as:
	\begin{equation*}
		\text{\textg}(x)=\sum_{-\infty }^{\infty }\sum_{-\infty }^{\infty }\gamma (i,%
		\text{\textrtaill} )\Psi _{i,\text{\textrtaill} }(x),
	\end{equation*}%
	in which
	\begin{equation*}
		\gamma (i,\text{\textrtaill} )=2^{\frac{i}{2}}\Psi _{i,\text{\textrtaill}
		}(x)\int_{\mathbb{R}}f(x)\Psi (2^{i}x-\text{\textrtaill} )\mathrm{d}x.
	\end{equation*}%
	Daubechies \cite{dau} formed an orthonormal basis for $L_{2}(\mathbb{R})$ expressed by
	\begin{equation*}
		2^{\frac{i}{2}}\Psi _{s}(x)(2^{i}x-\text{\textrtaill} ),
	\end{equation*}%
	where $s$ refers to the non-negative integer, $i,\text{\textrtaill} $ denote the integers set as well as the support of $\Psi _{s}$ is $[0,2s+1]$. For a positive constant $\xi $, if $ \Psi _{s}$ has $\xi s$ order of continuous derivatives, then for any $0\leq \text{\textrtaill} \leq s,~s\in \mathbb{N},$ we have
	
	\begin{equation}
		\int_{\mathbb{R}}x^{\text{\textrtaill} }\Psi _{s}(x)\mathrm{d}x=0.
		\label{h-1}
	\end{equation}%
	Evidently, when $s=0$, the system is reduced to the Haar system. Here, with regard to any $\Psi \in L_{\infty }(\mathbb{R}),$ we now have the conditions given by: (i) a finite positive $\xi $ having the property $\sup \Psi \subset \lbrack 0,\xi ],$ while (ii) its first $s$ moment vanishes. Furthermore, for $1\leq \text{\textrtaill} \leq s,~s\in \mathbb{N},$ we have $\int_{\mathbb{R}}t^{\text{\textrtaill} }\Psi (\text{\textrtailt})\mathrm{d}\text{\textrtailt}=0$ and $\int_{\mathbb{R}}\Psi (\text{\textrtailt})\mathrm{d}\text{\textrtailt}=1$. Therefore, by employing the Haar basis, the Baskakov type operators may be expressed by \cite{agratini}:
	
	\begin{equation}
		\left( \mathcal{L}_{m,\text{\textrtaill} }~\text{\textg}\right) (x)=m\sum_{%
			\text{\textrtaill} =0}^{\infty }\binom{m+\text{\textrtaill} -1}{\text{%
				\textrtaill} }\frac{x^{\text{\textrtaill} }}{(1+x)^{m+\text{\textrtaill} }}%
		\int_{\mathbb{R}}\text{\textg}\left( \text{\textrtailt}\right) \Psi \left( m%
		\text{\textrtailt}-\text{\textrtaill} \right) \mathrm{d}\text{\textrtailt},
		\label{mn-1}
	\end{equation}%
	%
	%
	%
	%
	%
	in which the operators $\mathcal{L}_{m,\text{\textrtaill} }$ refer to the extensions with respect to Baskakov-Kantorovich operators. By considering the $\sup \Psi \subset \lbrack 0,\xi ],$ the operators $\mathcal{L}_{m,\text{\textrtaill} }$ may be expressed as written below \cite{agratini}:
	
	\begin{equation}  \label{mn-2}
		\left(\mathcal{L}_{m,\text{\textrtaill}} ~ \text{\textg}\right)(x)= \sum_{%
			\text{\textrtaill}=0}^{\infty}\binom{m+\text{\textrtaill}-1}{\text{%
				\textrtaill}} \frac{x^\text{\textrtaill}}{(1+x)^{m+\text{\textrtaill}}}
		\int_{0}^{\xi}\text{\textg}\left(\frac{\text{\textrtailt}+\text{\textrtaill}%
		}{m}\right)\Psi(\text{\textrtailt})\mathrm{d}\text{\textrtailt}.
	\end{equation}
	
	
	The current section mentions the $\text{\texthtq}$-Baskakov type operators by employing compactly-supported wavelets of Daubechies constructed in \cite{nasir}. Let $\int_{ \mathbb{R}}x^{s}\Psi _{k}(x)\mathrm{d}_{\text{%
			\texthtq}}x=0$ when $0\leq s \leq k$ for $k\in \mathbb{N}$ as well as $\text{\texthtq%
	}>0.$\newline
	
	With regard to $\Psi \in L_{\infty }(\mathbb{R}),$ we assume the conditions given below in terms of wavelets: (i) a finite positive $\xi $ having the property $\sup \Psi \subset \lbrack 0,\xi ];$ and (ii) its first $k$ moment vanishes. With respect to $1\leq s \leq k$ and $k\in \mathbb{N}$, we now obtain $\int_{\mathbb{R}}\text{\textrtailt}^{s}\Psi (\text{\textrtailt})\mathrm{d}_{\text{\texthtq}}\text{\textrtailt}=0$ as well as $\int_{\mathbb{R}}\Psi (\text{\textrtailt})\mathrm{d}_{\text{\texthtq}}\text{\textrtailt}=1$. Therefore, for all $1\leq s \leq k,~ k\in \mathbb{N}$ as well as $0<\text{\texthtq}<1,$ we construct the $\text{\texthtq}-$analogue of Baskakov-Kantorovich type wavelets operators given by:
	
	\begin{equation}  \label{2.9}
		\left(\mathcal{S}_{\text{\textlonglegr},s,\text{\texthtq}} ~ \text{\textg}%
		\right)(x)= [\text{\textlonglegr}]_\text{\texthtq} \sum_{s =0}^{\infty }
		\text{\texthtq}^{s-1}B_{\text{\textlonglegr},s,\text{\texthtq}}(x) \int_{%
			\mathbb{R}}\text{\textg}\left(\text{\textrtailt}\right)\Psi \left(\text{%
			\texthtq}^{s-1}[\text{\textlonglegr}]_\text{\texthtq}\text{\textrtailt}-[s]_%
		\text{\texthtq}\right)\mathrm{d}_\text{\texthtq}\text{\textrtailt}.
	\end{equation}
	
	Thus, our operators $\mathcal{S}_{\text{\textlonglegr},s ,\text{\texthtq}}(\text{\textg};x)$ extends the $\text{\texthtq}$-Baskakov-Kantorovich operators expressed in \eqref{operator-1}. With respect to the choices of $k=0$ as well as $\Psi $ Haar basis, we obtain the exact $\text{\texthtq}$ -Baskakov-Kantorovich operators $\mathcal{T}_{\text{\textlonglegr},s ,\text{\texthtq}}(\text{\textg};x)$ by \eqref{operator-1}. Additionally, for the choices $k=0,~ \text{\texthtq}=1$ as well as $\Psi $ Haar basis, we determined the Baskakov-Kantorovich operators $\mathcal{K}_{\text{\textlonglegr},s ,\text{\texthtq}}(\text{\textg};x)$ by \eqref{k1}. Considering the $\sup \Psi \subset \lbrack 0,\xi ],$ the operators $\mathcal{S}_{\text{\textlonglegr},s ,\text{\texthtq}}(\text{\textg};x)$ may be expressed as given below:
	
	\begin{equation}  \label{operator-3}
		\left(\mathcal{S}_{\text{\textlonglegr},s,\text{\texthtq}} ~ \text{\textg}%
		\right)(x)= \sum_{s =0}^{\infty } B_{\text{\textlonglegr},s,\text{\texthtq}%
		}(x) \int_{0}^{\xi } \text{\textg}\left(\frac{\text{\textrtailt}+[s]_\text{%
				\texthtq}}{\text{\texthtq}^{s-1}[\text{\textlonglegr}]_\text{\texthtq}}%
		\right)\Psi\left(\text{\textrtailt}\right) \mathrm{d}_\text{\texthtq}\text{%
			\textrtailt}.
	\end{equation}
	
	It is evident that by choosing $\text{\texthtq}=1$, we obtain classical Baskakov-Kantorovich wavelets operators $\mathcal{L}_{\text{\textlonglegr},s }$ by \eqref{mn-1} as well as \eqref{mn-2}.
	
	We need the following result of \cite{nasir}:
	
	\begin{theorem}
		\label{ttt1} Suppose $e_{j}=t^{j}$ when $0\leq j\leq k$ and $k\in \mathbb{N}$.
		Therefore, we obtain
		\begin{equation*}
			\left( \mathcal{S}_{r,s ,\text{\texthtq}}\;e_{j}\right) (x)=\left( \mathcal{V}_{r,s
				,\text{\texthtq}}\;e_{j}\right) (x),
		\end{equation*}%
		in which $x\in \lbrack 0,\infty )$ as well as the operators $\left( \mathcal{V}_{r,s
			,\text{\texthtq}}\;g\right) (x)$ defined as above.\label{Theorem 2.1}
	\end{theorem}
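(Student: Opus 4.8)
The plan is to prove the identity by a direct computation from the compactly supported representation \eqref{operator-3} of the operators $\mathcal{S}_{r,s,\text{\texthtq}}$, using only the vanishing-moment hypotheses imposed on the wavelet $\Psi$; in particular no Bohman--Korovkin-type argument is needed for this step. Fix an index $j$ with $0\le j\le k$, and, to avoid the notational overloading in \eqref{operator-3}, write $\nu$ for the summation index occurring there.

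First I would substitute $e_j(\text{\textrtailt})=\text{\textrtailt}^{\,j}$ into \eqref{operator-3}. Since the argument of $g$ in that formula is $(\text{\textrtailt}+[\nu]_\text{\texthtq})/(\text{\texthtq}^{\nu-1}[r]_\text{\texthtq})$, this gives
\begin{equation*}
\left(\mathcal{S}_{r,s,\text{\texthtq}}\,e_j\right)(x)=\sum_{\nu=0}^{\infty}\frac{B_{r,\nu,\text{\texthtq}}(x)}{\bigl(\text{\texthtq}^{\nu-1}[r]_\text{\texthtq}\bigr)^{j}}\int_{0}^{\xi}\bigl(\text{\textrtailt}+[\nu]_\text{\texthtq}\bigr)^{j}\Psi(\text{\textrtailt})\,\mathrm{d}_\text{\texthtq}\text{\textrtailt}.
\end{equation*}
Then I would expand $\bigl(\text{\textrtailt}+[\nu]_\text{\texthtq}\bigr)^{j}=\sum_{i=0}^{j}\binom{j}{i}[\nu]_\text{\texthtq}^{\,j-i}\text{\textrtailt}^{\,i}$ by the ordinary binomial theorem and take this finite sum out of the $\text{\texthtq}$-Jackson integral, so that the inner integral becomes $\sum_{i=0}^{j}\binom{j}{i}[\nu]_\text{\texthtq}^{\,j-i}\int_{0}^{\xi}\text{\textrtailt}^{\,i}\Psi(\text{\textrtailt})\,\mathrm{d}_\text{\texthtq}\text{\textrtailt}$.

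Now the hypotheses on $\Psi$ do the work: because $\operatorname{supp}\Psi\subset[0,\xi]$, $\int_{\mathbb{R}}\text{\textrtailt}^{\,i}\Psi(\text{\textrtailt})\,\mathrm{d}_\text{\texthtq}\text{\textrtailt}=0$ for $1\le i\le k$, and $\int_{\mathbb{R}}\Psi(\text{\textrtailt})\,\mathrm{d}_\text{\texthtq}\text{\textrtailt}=1$, and because $0\le i\le j\le k$, every term with $i\ge 1$ drops out and only the $i=0$ term survives, contributing $[\nu]_\text{\texthtq}^{\,j}$. Hence the inner integral equals $[\nu]_\text{\texthtq}^{\,j}$, and feeding this back yields
\begin{equation*}
\left(\mathcal{S}_{r,s,\text{\texthtq}}\,e_j\right)(x)=\sum_{\nu=0}^{\infty}B_{r,\nu,\text{\texthtq}}(x)\left(\frac{[\nu]_\text{\texthtq}}{\text{\texthtq}^{\nu-1}[r]_\text{\texthtq}}\right)^{j}=\left(\mathcal{V}_{r,s,\text{\texthtq}}\,e_j\right)(x),
\end{equation*}
the last equality being exactly the definition of $\mathcal{V}_{r,s,\text{\texthtq}}$ evaluated at $e_j$. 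Since $j\in\{0,1,\dots,k\}$ was arbitrary, this proves the theorem.

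I expect the obstacles to be minor and purely a matter of bookkeeping. The termwise handling of the infinite sum over $\nu$ is legitimate by absolute convergence, using $B_{r,\nu,\text{\texthtq}}(x)\ge 0$ for $x\ge 0$, $\sum_{\nu}B_{r,\nu,\text{\texthtq}}(x)=1$ (Lemma \ref{Lemma 2.1}(1)), and the boundedness and compact support of $\Psi$. The only point where genuine care is needed lies outside this computation, namely in the equivalence of \eqref{2.9} and \eqref{operator-3}: one should check that the affine change of variable $\text{\textrtailt}\mapsto\text{\texthtq}^{\nu-1}[r]_\text{\texthtq}\text{\textrtailt}-[\nu]_\text{\texthtq}$ inside the $\text{\texthtq}$-Jackson integral behaves as expected and correctly transports the support condition and the moment conditions. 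If \eqref{operator-3} is simply taken as the working formula, as the paper does, the computation above is the whole proof.
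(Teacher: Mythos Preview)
The paper does not supply its own proof of this theorem; it is quoted verbatim as ``the following result of \cite{nasir}''. Your argument is correct and is precisely the standard computation one expects (and which appears in the cited source): substitute $e_j$ into the compactly supported form \eqref{operator-3}, binomially expand $(\text{\textrtailt}+[\nu]_\text{\texthtq})^j$, kill the $i\ge 1$ terms with the vanishing $\text{\texthtq}$-moments of $\Psi$, and recognize the remaining series as $\mathcal{V}_{r,s,\text{\texthtq}}\,e_j$. Your renaming of the summation index to $\nu$ is a sensible clarification of the paper's overloaded notation, and your remark about the change of variable linking \eqref{2.9} and \eqref{operator-3} correctly flags the only subtle point.
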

	
	\section{\textbf{Weighted $\text{\texthtq}$-Statistical approximation}}
	
	This section presents the statistical approximation of Wavelets Kantorovich $\text{\texthtq}$-Baskakov operators $\mathcal{S}_{\text{\textlonglegr},s,\text{\texthtq}}$ defined by (\ref{2.9}) employing a Bohman  Korovkin-type theorem proven in \cite{duman2006s}. \newline
	
	Suppose $N_{\text{\textg}}$ is the constant depending on the function $\text{\textg}$ and represent the weighted space of a real valued function $\text{\textg}$ expressed on $\mathbb{R}$ by $B_{\rho}(\mathbb{R})$ with the
	property that $|\text{\textg}(x)|\leq N_{\text{\textg}}\rho (x)$ for all $%
	x\in \mathbb{R}$.  Now, we take into consideration the weighted subspace $C_{\rho }(%
	\mathbb{R})$ of $B_{\rho }(\mathbb{R})$ which is provided as
	\begin{equation*}
		\begin{aligned} C_{\rho}(x)(\mathbb{R}) = \{\text{\textg} \in
			B_{\rho}(\mathbb{R}): \mathrm{\text{\textg}}~ \mathrm{continuous}~
			\mathrm{in}~ \mathbb{R}\}. \end{aligned}
	\end{equation*}%
	This fits with the norm $\parallel .\parallel $, where $\parallel \text{\textg}%
	\parallel _{\rho }=\sup\limits_{x\in \mathbb{R}}\dfrac{|\text{\textg}(x)|}{%
		\rho (x)}$ and both $C_{\rho }(\mathbb{R})$ and $B_{\rho }(\mathbb{R})$ are
	Banach spaces. By the use of A-statistical convergence, Duman and Orhan \cite{duman2006s} proved the theorem given below, which is useful in proving our main result.
	
	\begin{theorem}
		(Duman and Orhan \cite{duman2006s}).  If $A = (a_{j\text{\textlonglegr}})_{j,\text{\textlonglegr}}$ is a positive regular summability matrix, then let $(L_{\text{\textlonglegr}})_{\text{\textlonglegr}}$ denote a sequence of positive linear operators from $C_{\rho_{1}}(\mathbb{R})$ to $B_{\rho_{2}}(\mathbb{R})$, in which $\rho_{1}$ as well as $\rho_{2}$ satisfies $\lim\limits_{|x|\to \infty}\dfrac{\rho_{1}}{\rho_{2}}$ = 0. Then
		\begin{equation*}
			\begin{aligned} st_{A} - \lim\limits_{\text{\textlonglegr}}\parallel
				L_{\text{\textlonglegr}}\text{\texthtq} - \text{\texthtq}\parallel
				_{\rho_{2}} = 0, ~ \forall \text{\texthtq} \in C_{\rho_{1}}(\mathbb{R})
			\end{aligned}
		\end{equation*}
		if and only if
		\begin{equation*}
			\begin{aligned} st_{A} - \lim\limits_{\text{\textlonglegr}}\parallel
				L_{\text{\textlonglegr}}H_{v} - H_{v}\parallel _{\rho_{1}} = 0 ~ for ~
				v=0,1,2, \end{aligned}
		\end{equation*}
		in which $H_{v} = \dfrac{x^{v}\rho_{1}(x)}{1 + x^{2}}$.\label{Theorem 3.1}
	\end{theorem}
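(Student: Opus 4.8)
To prove this, I would read the statement as the $A$-statistical counterpart of Gadjiev's weighted Korovkin theorem and run a ``compact core versus tail'' estimate, converting ordinary limits into $A$-statistical ones at the end. The only fact about $A$-density that is really needed is that a finite union of sets of $A$-density zero again has $A$-density zero (so that the three test-function conditions can be merged) together with the trivial remark that if $a_{r}\le b_{r}+c$ with $st_{A}\text{-}\lim b_{r}=0$ then $st_{A}\text{-}\limsup a_{r}\le c$. The implication ``general convergence $\Rightarrow$ test conditions'' is the easy half: since $|x|^{v}/(1+x^{2})\le 1$ for $v=0,1,2$ one has $|H_{v}(x)|\le\rho_{1}(x)$, so $H_{v}\in C_{\rho_{1}}(\mathbb{R})$, and one merely specialises the hypothesis to $g=H_{v}$. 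So the work goes into the converse.

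For sufficiency, fix $g\in C_{\rho_{1}}(\mathbb{R})$ and $\varepsilon\in(0,1]$, and abbreviate $\Delta_{r}^{(v)}:=\|L_{r}H_{v}-H_{v}\|_{\rho_{1}}$, so that $st_{A}\text{-}\lim_{r}\Delta_{r}^{(v)}=0$ for $v=0,1,2$ by hypothesis. \emph{Tail.} Because $\lim_{|x|\to\infty}\rho_{1}(x)/\rho_{2}(x)=0$, choose $A>0$ with $\rho_{1}(x)<\varepsilon\,\rho_{2}(x)$ for $|x|>A$. Since $H_{0}+H_{2}=\rho_{1}$ and $|g|\le N_{g}\rho_{1}$, positivity and linearity of $L_{r}$ give $L_{r}(\rho_{1};x)\le(1+\Delta_{r}^{(0)}+\Delta_{r}^{(2)})\rho_{1}(x)$, hence for $|x|>A$
\[
\frac{|L_{r}g(x)-g(x)|}{\rho_{2}(x)}\le N_{g}\,\frac{L_{r}(\rho_{1};x)+\rho_{1}(x)}{\rho_{2}(x)}\le N_{g}\,\big(2+\Delta_{r}^{(0)}+\Delta_{r}^{(2)}\big)\,\varepsilon .
\]
This crude bound uses no regularity of $g$ near infinity — only the ratio condition — which is exactly what allows the statement to hold for all of $C_{\rho_{1}}(\mathbb{R})$ rather than for a proper subspace. \emph{Core.} On $[-A,A]$ put $\phi_{0}(x):=\rho_{1}(x)/(1+x^{2})$; it is continuous and satisfies $0<c\le\phi_{0}\le C<\infty$ there, and the twisted test functions factor as $H_{v}(t)=t^{v}\phi_{0}(t)$, with $H_{0}=\phi_{0}$. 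Passing to $\widetilde{L}_{r}h:=L_{r}(h\phi_{0};\cdot)/\phi_{0}$ (legitimate on each fixed compact, since $h\phi_{0}$ ranges over $g$, $H_{0}$, $H_{1}$, $H_{2}\in C_{\rho_{1}}(\mathbb{R})$), one has $\widetilde{L}_{r}(t^{j};\cdot)=L_{r}(H_{j};\cdot)/\phi_{0}$, so $\sup_{[-A,A]}|\widetilde{L}_{r}(t^{j};x)-x^{j}|\le(\max_{[-A,A]}\rho_{1}/c)\,\Delta_{r}^{(j)}$ for $j=0,1,2$; thus $\{\widetilde{L}_{r}\}$ satisfies the classical monomial Korovkin test on $[-A,A]$ with defects controlled by the $\Delta_{r}^{(j)}$. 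Combining this with the standard device that $2\varepsilon\pm(t-x)^{2}$ majorises $g(t)-g(x)$ locally (quantitative Shisha–Mond estimate), and using $\widetilde{L}_{r}(\,g/\phi_{0};x)=L_{r}(g;x)/\phi_{0}(x)$ together with $\rho_{2}\ge\mathrm{const}>0$ on $[-A,A]$, yields
\[
\sup_{|x|\le A}\frac{|L_{r}g(x)-g(x)|}{\rho_{2}(x)}\le F\big(\Delta_{r}^{(0)},\Delta_{r}^{(1)},\Delta_{r}^{(2)}\big),
\]
with $F$ continuous and $F(0,0,0)=0$.

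Combining the two regimes, for every $r$ we get $\|L_{r}g-g\|_{\rho_{2}}\le 2N_{g}\varepsilon+G\big(\Delta_{r}^{(0)},\Delta_{r}^{(1)},\Delta_{r}^{(2)}\big)$ where $G$ is continuous, $G(0,0,0)=0$ (here $\varepsilon\le 1$ absorbs the terms $\varepsilon\,\Delta_{r}^{(v)}$ into $G$). Given $\gamma>0$, take $\varepsilon:=\gamma/(4N_{g})$ (which fixes $A$ and hence $G$) and pick $\theta>0$ with $G<\gamma/2$ whenever all three arguments are $<\theta$; then $\{r:\|L_{r}g-g\|_{\rho_{2}}\ge\gamma\}\subseteq\bigcup_{v=0}^{2}\{r:\Delta_{r}^{(v)}\ge\theta\}$, a finite union of sets of $A$-density zero, hence itself of $A$-density zero. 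As $\gamma>0$ and $g\in C_{\rho_{1}}(\mathbb{R})$ are arbitrary, this gives $st_{A}\text{-}\lim_{r}\|L_{r}g-g\|_{\rho_{2}}=0$ for all $g\in C_{\rho_{1}}(\mathbb{R})$, completing the circle of implications. The genuine obstacle is the core step: securing a Korovkin-type estimate when the only test functions available are the twisted $H_{v}=t^{v}\phi_{0}$ rather than the monomials themselves (equivalently, the absence of a genuine constant test function); conjugating $L_{r}$ by the locally positive, locally bounded weight $\phi_{0}=\rho_{1}/(1+x^{2})$ — valid on each compact, though not globally, as $\phi_{0}$ need be neither bounded nor bounded away from $0$ at infinity — is what reduces it to the textbook case. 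Everything after that is routine bookkeeping with $A$-density.
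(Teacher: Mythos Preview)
The paper does not prove this theorem; it is quoted from Duman and Orhan \cite{duman2006s} as a known result and used as a black box, so there is no in-paper argument to compare yours against.

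That said, two comments on your sketch. First, the ``easy'' direction is not quite as written: specialising the hypothesis to $g=H_{v}$ yields $st_{A}\text{-}\lim\|L_{r}H_{v}-H_{v}\|_{\rho_{2}}=0$, whereas the statement asks for the $\rho_{1}$-norm; since $\rho_{1}/\rho_{2}\to 0$ only gives $\|\cdot\|_{\rho_{2}}\lesssim\|\cdot\|_{\rho_{1}}$ and not the reverse, the conclusion does not follow by mere specialisation. (This is arguably a defect of the stated ``if and only if''; the paper only uses the sufficiency direction anyway.) Second, in your core step the function to which Shisha--Mond is applied is $h=g/\phi_{0}$, which is not bounded on $\mathbb{R}$: it grows like $1+t^{2}$. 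The word ``locally'' does not rescue this, because $\widetilde{L}_{r}$ sees $h(t)$ for all $t\in\mathbb{R}$. The fix is straightforward but should be said: for $|x|\le A$ one has $1+t^{2}\le C_{A}(1+(t-x)^{2})$, so the quadratic growth of $h$ is absorbed by the second moment $\widetilde{L}_{r}((t-x)^{2};x)$, and the usual inequality $|h(t)-h(x)|\le\varepsilon+M_{\varepsilon}(t-x)^{2}$ does hold globally in $t$ for $|x|\le A$. With those two points addressed, your conjugation-by-$\phi_{0}$ reduction to the classical Korovkin estimate is a clean and correct route to the sufficiency direction.
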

	
	By examining this result, it is clear that if $\mathbb{R}$ is substituted by $ \mathbb{R_{+}}$, then the theorem holds true. Also, by analyzing Lemma \ref {Lemma 2.1}, we see that the sequence of operators $(\mathcal{S}_{\text{\textlonglegr},s ,\text{\texthtq}})_{\text{\textlonglegr}}$ fail to satisfy the properties of Bohman-Korovkin theorem. Now, let us take into consideration the weight functions $\rho_{0}(x) = 1 + x^{2}$ and $ \rho_{\alpha}(x) = 1 + x^{2+\alpha}$ for $x\in \mathbb{R_{+}}$ and $\alpha>0$ together with the remark below.
	
	\begin{remark}
		It is true that for $\text{\texthtq}\in(0,1)$, then $\lim\limits_{\text{%
				\textlonglegr}\to \infty}[\text{\textlonglegr}]_\text{\texthtq}=0$ or $%
		\dfrac{1}{1-\text{\texthtq}}$. Now, we consider the sequence $ (\text{%
			\texthtq}_{\text{\textlonglegr}})_{\text{\textlonglegr}}$ for $\text{\texthtq%
		}_{\text{\textlonglegr}}\in(0,1)$ with the property that $st -
		\lim\limits_{\text{\textlonglegr}\to\infty}\text{\texthtq}_{\text{%
				\textlonglegr}}=1$ and $st -  \lim\limits_{\text{\textlonglegr}\to\infty}%
		\text{\texthtq}_{\text{\textlonglegr}}^{\text{\textlonglegr}}=1$. Based on
		these facts, we have $ \lim\limits_{\text{\textlonglegr}\to\infty}[\text{%
			\textlonglegr}]_\text{\texthtq}=\infty$. This will help to check the convergence with respect to the operators expressed by inequality (\ref{2.9}). Thus, we now obtain the theorem given below: \label{Rmk 3.1}
	\end{remark}
	
	
	\begin{theorem}
		Suppose that the sequence $(\text{\texthtq}_{\text{\textlonglegr}})_{\text{%
				\textlonglegr}}$ satisfy Remark \ref{Rmk 3.1} above and $\mathcal{S}_{\text{%
				\textlonglegr},s ,\text{\texthtq}}$ is a positive linear operator. Then, we
		have:
		\begin{equation*}
			\begin{aligned} St_{\text{\texthtq}}-\lim\limits_{\text{\textlonglegr}}\parallel
				(\mathcal{S}_{\text{\textlonglegr},s ,\text{\texthtq}} (\text{\textg}) -
				\text{\textg}\parallel _{\rho_{\alpha}} = 0, ~ \forall \text{\textg} \in
				C_{\rho_{0}}(\mathbb{R_{+}}). \end{aligned}
		\end{equation*}
	\end{theorem}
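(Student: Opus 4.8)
The plan is to apply the Duman–Orhan weighted Korovkin-type theorem (Theorem \ref{Theorem 3.1}) with the summability matrix $A$ taken to be the $\text{\texthtq}$-Cesàro matrix $C_{1}(\text{\texthtq})$, so that $st_{A}$-convergence becomes precisely $St_{\text{\texthtq}}$-convergence as defined in the preliminaries. We set $\rho_{1}=\rho_{0}(x)=1+x^{2}$ and $\rho_{2}=\rho_{\alpha}(x)=1+x^{2+\alpha}$ on $\mathbb{R}_{+}$; since $\alpha>0$, we have $\lim_{x\to\infty}\rho_{0}(x)/\rho_{\alpha}(x)=0$, so the hypothesis on the weights is met, and the operators $\mathcal{S}_{\text{\textlonglegr},s,\text{\texthtq}}$ are positive linear operators from $C_{\rho_{0}}(\mathbb{R}_{+})$ into $B_{\rho_{\alpha}}(\mathbb{R}_{+})$. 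By the cited theorem (with $\mathbb{R}$ replaced by $\mathbb{R}_{+}$, as noted in the text), it suffices to verify the three test-function conditions
\begin{equation*}
St_{\text{\texthtq}}-\lim_{\text{\textlonglegr}}\big\|\mathcal{S}_{\text{\textlonglegr},s,\text{\texthtq}}H_{v}-H_{v}\big\|_{\rho_{0}}=0,\qquad v=0,1,2,
\end{equation*}
where $H_{v}(x)=x^{v}\rho_{0}(x)/(1+x^{2})=x^{v}$. Thus the task reduces to controlling $\mathcal{S}_{\text{\textlonglegr},s,\text{\texthtq}}e_{v}-e_{v}$ in the $\rho_{0}$-norm for $v=0,1,2$.

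The first step is to compute $\mathcal{S}_{\text{\textlonglegr},s,\text{\texthtq}}e_{v}$. By Theorem \ref{ttt1}, for $0\le j\le k$ we have $(\mathcal{S}_{r,s,\text{\texthtq}}e_{j})(x)=(\mathcal{V}_{r,s,\text{\texthtq}}e_{j})(x)$, and since we are free to take $k\ge 2$, Lemma \ref{Lemma 2.1} gives directly
\begin{equation*}
\mathcal{S}_{\text{\textlonglegr},s,\text{\texthtq}}e_{0}=1,\qquad \mathcal{S}_{\text{\textlonglegr},s,\text{\texthtq}}e_{1}=x,\qquad \mathcal{S}_{\text{\textlonglegr},s,\text{\texthtq}}e_{2}=x^{2}+\frac{x}{[\text{\textlonglegr}]_{\text{\texthtq}}}\Big(1+\frac{x}{\text{\texthtq}}\Big).
\end{equation*}
Hence the first two conditions hold exactly (the corresponding differences are identically zero), and only $v=2$ requires work: we must show $St_{\text{\texthtq}}-\lim_{\text{\textlonglegr}}\big\|\mathcal{S}_{\text{\textlonglegr},s,\text{\texthtq}}e_{2}-e_{2}\big\|_{\rho_{0}}=0$.

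The second step is the norm estimate for $v=2$. We have
\begin{equation*}
\big\|\mathcal{S}_{\text{\textlonglegr},s,\text{\texthtq}}e_{2}-e_{2}\big\|_{\rho_{0}}
=\sup_{x\ge 0}\frac{1}{1+x^{2}}\cdot\frac{x}{[\text{\textlonglegr}]_{\text{\texthtq}}}\Big(1+\frac{x}{\text{\texthtq}}\Big)
\le \frac{1}{[\text{\textlonglegr}]_{\text{\texthtq}}}\sup_{x\ge 0}\frac{x+x^{2}/\text{\texthtq}}{1+x^{2}}.
\end{equation*}
Since $\text{\texthtq}=\text{\texthtq}_{\text{\textlonglegr}}\in(0,1)$ and $st-\lim\text{\texthtq}_{\text{\textlonglegr}}=1$, for all $\text{\textlonglegr}$ outside a density-zero set we have $\text{\texthtq}_{\text{\textlonglegr}}\ge 1/2$, so $\sup_{x\ge 0}(x+x^{2}/\text{\texthtq}_{\text{\textlonglegr}})/(1+x^{2})\le C$ for a fixed constant $C$ (e.g. $C=1+2=3$). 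Therefore $\big\|\mathcal{S}_{\text{\textlonglegr},s,\text{\texthtq}}e_{2}-e_{2}\big\|_{\rho_{0}}\le C/[\text{\textlonglegr}]_{\text{\texthtq}_{\text{\textlonglegr}}}$ off a density-zero set.

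The final step is to convert this bound into $St_{\text{\texthtq}}$-convergence, and this is the main obstacle: the quantity $[\text{\textlonglegr}]_{\text{\texthtq}_{\text{\textlonglegr}}}=(1-\text{\texthtq}_{\text{\textlonglegr}}^{\text{\textlonglegr}})/(1-\text{\texthtq}_{\text{\textlonglegr}})$ need not tend to $\infty$ for \emph{every} $\text{\textlonglegr}$, but Remark \ref{Rmk 3.1} guarantees $\lim_{\text{\textlonglegr}\to\infty}[\text{\textlonglegr}]_{\text{\texthtq}_{\text{\textlonglegr}}}=\infty$ under the stated conditions $st-\lim\text{\texthtq}_{\text{\textlonglegr}}=1$ and $st-\lim\text{\texthtq}_{\text{\textlonglegr}}^{\text{\textlonglegr}}=1$. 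Given $\varepsilon>0$, let $\mathcal{K}_{\varepsilon}=\{\text{\textlonglegr}:\text{\texthtq}_{\text{\textlonglegr}}^{\text{\textlonglegr}-1}\|\mathcal{S}_{\text{\textlonglegr},s,\text{\texthtq}}e_{2}-e_{2}\|_{\rho_{0}}\ge\varepsilon\}$. On this set, off a density-zero exceptional set, $C/[\text{\textlonglegr}]_{\text{\texthtq}_{\text{\textlonglegr}}}\ge\text{\texthtq}_{\text{\textlonglegr}}^{\text{\textlonglegr}-1}\|\mathcal{S}_{\text{\textlonglegr},s,\text{\texthtq}}e_{2}-e_{2}\|_{\rho_{0}}\ge\varepsilon$, forcing $[\text{\textlonglegr}]_{\text{\texthtq}_{\text{\textlonglegr}}}\le C/\varepsilon$; but since $[\text{\textlonglegr}]_{\text{\texthtq}_{\text{\textlonglegr}}}\to\infty$, this inequality can hold only for finitely many $\text{\textlonglegr}$, so $\mathcal{K}_{\varepsilon}$ is contained in the union of a finite set and a density-zero set, whence $\delta_{\text{\texthtq}}(\mathcal{K}_{\varepsilon})=0$. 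This establishes $St_{\text{\texthtq}}-\lim_{\text{\textlonglegr}}\|\mathcal{S}_{\text{\textlonglegr},s,\text{\texthtq}}e_{2}-e_{2}\|_{\rho_{0}}=0$; combined with the exact identities for $v=0,1$, all three Korovkin conditions hold, and Theorem \ref{Theorem 3.1} yields $St_{\text{\texthtq}}-\lim_{\text{\textlonglegr}}\|\mathcal{S}_{\text{\textlonglegr},s,\text{\texthtq}}(\text{\textg})-\text{\textg}\|_{\rho_{\alpha}}=0$ for every $\text{\textg}\in C_{\rho_{0}}(\mathbb{R}_{+})$, as claimed. The subtle point throughout is to keep track that $\text{\texthtq}$-density statements (involving the weights $\text{\texthtq}^{k-1}$) behave well, and that ordinary statistical convergence of $(\text{\texthtq}_{\text{\textlonglegr}})$ and $(\text{\texthtq}_{\text{\textlonglegr}}^{\text{\textlonglegr}})$ indeed upgrades to the $\text{\texthtq}$-statistical framework needed to invoke the Duman–Orhan theorem with $A=C_{1}(\text{\texthtq})$.
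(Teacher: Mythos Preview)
Your proposal is correct and follows essentially the same route as the paper: you reduce to the three test-function conditions via Theorem~\ref{ttt1} and Lemma~\ref{Lemma 2.1}, observe that $v=0,1$ are exact, bound the $v=2$ difference by a quantity comparable to $1/[\text{\textlonglegr}]_{\text{\texthtq}_{\text{\textlonglegr}}}$, invoke Remark~\ref{Rmk 3.1}, and close with the Duman--Orhan theorem for the weights $\rho_{0},\rho_{\alpha}$. The only notable difference is that you take the summability matrix to be the $\text{\texthtq}$-Ces\`aro matrix $C_{1}(\text{\texthtq})$ and carry out the $\text{\texthtq}$-density argument explicitly, whereas the paper simply sets $A=C_{1}$ and appeals directly to Remark~\ref{Rmk 3.1}; your version is a bit more careful on this point but not substantively different.
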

	
	\begin{proof}
		Based on Lemma \ref{Lemma 2.1}(i) and Theorem \ref{Theorem 2.1}, we now have:
		\begin{equation*}
			\begin{aligned}
				\parallel (\mathcal{S}_{\text{\textlonglegr},s,\text{\texthtq}} (\text{\textg}) - \text{\textg}\parallel _{\rho_{0}} &=\sup\limits_{x \in \mathbb{R}} \dfrac{| (\mathcal{S}_{\text{\textlonglegr},s,\text{\texthtq}_{\text{\textlonglegr}}} e_{0})(x) - e_{0}(x)|}{1+x^{2}},\\
				& =\sup\limits_{x \in \mathbb{R}}  \dfrac{|1 - 1|}{1+x^{2}}, \\
				& = 0.
			\end{aligned}
		\end{equation*}
		In other words,
		\begin{equation*}
			St_{\text{\texthtq}}-\lim\limits_{\text{\textlonglegr}}\parallel (\mathcal{S}_{\text{%
					\textlonglegr},s,\text{\texthtq}} (\text{\textg}) - \text{\textg}\parallel
			_{\rho_{0}} =0.
		\end{equation*}%
		\newline
		Again, based on Lemma \ref{Lemma 2.1}(ii) and Theorem \ref{Theorem 2.1}, we now
		have:
		\begin{equation*}
			\begin{aligned}
				\parallel (\mathcal{S}_{\text{\textlonglegr},s,\text{\texthtq}} (\text{\textg}) - \text{\textg}\parallel _{\rho_{0}} &= \sup\limits_{x \in \mathbb{R}} \dfrac{| (\mathcal{S}_{\text{\textlonglegr},s,\text{\texthtq}_{\text{\textlonglegr}}} e_{1})(x) - e_{1}(x)|}{1+x^{2}},\\
				& =\sup\limits_{x \in \mathbb{R}}  \dfrac{|x - x|}{1+x^{2}}, \\
				& = 0.
			\end{aligned}
		\end{equation*}
		
		Using Lemma \ref{Lemma 2.1}(iii) and Theorem \ref{Theorem 2.1}, we now have:
		\begin{equation*}
			\begin{aligned}
				\parallel (\mathcal{S}_{\text{\textlonglegr},s,\text{\texthtq}} (\text{\textg}) - \text{\textg}\parallel_{\rho_{0}} &= \sup\limits_{x \in \mathbb{R}} \dfrac{\left| (\mathcal{S}_{\text{\textlonglegr},s,\text{\texthtq}_{\text{\textlonglegr}}} e_{2})(x) - e_{2}(x)\right|}{1+x^{2}},\\
				& =\sup\limits_{x \in \mathbb{R}}  \dfrac{\left| \left(  x^{2} + x\dfrac{1}{\left[ \text{\textlonglegr}\right] _{\text{\texthtq}_{\text{\textlonglegr}}}}\left( 1 + \dfrac{1}{\text{\texthtq}_{\text{\textlonglegr}}}x\right)\right)  - x^{2}\right|}{1+x^{2}}, \\
				& =\sup\limits_{x \in \mathbb{R}} \dfrac{\left|  \left( 1 +\dfrac{1}{\text{\texthtq}_{\text{\textlonglegr}}\left[ \text{\textlonglegr}\right] _{\text{\texthtq}_{n}}} - 1\right)  x^{2} + x\dfrac{1}{\left[ \text{\textlonglegr}\right] _{\text{\texthtq}_{\text{\textlonglegr}}}} \right|}{1+x^{2}}, \\
				& \leq \sup\limits_{x \in \mathbb{R}} \left|  \dfrac{1}{\text{\texthtq}_{\text{\textlonglegr}}\left[ \text{\textlonglegr}\right] _{\text{\texthtq}_{\text{\textlonglegr}}}}  x^{2} + x\dfrac{1}{\left[ \text{\textlonglegr}\right] _{\text{\texthtq}_{\text{\textlonglegr}}}} \right|,\\
				& \leq \sup\limits_{x \in \mathbb{R}} \left( \left|x^{2}\right|  \dfrac{1}{\text{\texthtq}_{\text{\textlonglegr}}\left[ \text{\textlonglegr}\right] _{\text{\texthtq}_{\text{\textlonglegr}}}}   + \left| x\right|\dfrac{1}{\left[ \text{\textlonglegr}\right] _{\text{\texthtq}_{\text{\textlonglegr}}}}\right),\\
				& = \left(\parallel e_{2}\parallel _{\rho_{0}} \dfrac{1}{\text{\texthtq}_{\text{\textlonglegr}}\left[ \text{\textlonglegr}\right] _{\text{\texthtq}_{\text{\textlonglegr}}}}   + \parallel e_{1}\parallel _{\rho_{0}}\dfrac{1}{\left[ \text{\textlonglegr}\right] _{\text{\texthtq}_{\text{\textlonglegr}}}}\right),\\
				& \leq \left( \dfrac{1}{\text{\texthtq}_{\text{\textlonglegr}}\left[ \text{\textlonglegr}\right] _{\text{\texthtq}_{\text{\textlonglegr}}}}  + \dfrac{1}{\left[ \text{\textlonglegr}\right] _{\text{\texthtq}_{\text{\textlonglegr}}}}\right).
			\end{aligned}
		\end{equation*}
		
		From Remark \ref{Rmk 3.1}, we have $st - \lim\limits_{\text{\textlonglegr}%
			\to\infty}\text{\texthtq}_{\text{\textlonglegr}}=1$. Furthermore, we also obtained $%
		\lim\limits_{\text{\textlonglegr}\to\infty}[\text{\textlonglegr}]_{\text{%
				\texthtq}}=\infty$. Consequently
		\begin{equation*}
			\begin{aligned}
				St_{\text{\texthtq}}-\lim\limits_{\text{\textlonglegr}}\parallel (\mathcal{S}_{\text{\textlonglegr},s,\text{\texthtq}} (\text{\textg}) - \text{\textg}\parallel_{\rho_{0}}  = 0.
			\end{aligned}
		\end{equation*}
		By employing Lemma \ref{Lemma 2.1} and also selecting $A = C_{1}$, known as the Ces\'{a}ro matrix of order one, $\rho_{0}(x) = 1 + x^{2}$, $\rho_{\alpha}(x) = 1 + x^{2+\alpha}$ for $x\in \mathbb{R_{+}}$ and $\alpha>0$, the proof may be immediately seen from Theorem \ref{Theorem 3.1}.
	\end{proof}
	
	\section{\protect\bigskip \textbf{The Convergence Rate}}
	
	In this sub-section, by means of weighted modulus of smoothness correlated to the space $B_{\rho\alpha}(\mathbb{R_{+}})$ and Lipschitz type maximal functions, we present the rates of statistical convergence with respect to the operators $\mathcal{S}_{\text{\textlonglegr},s,\text{\texthtq}}$ expressed by inequality (\ref{2.9}). The weighted modulus with respect to smoothness $\omega_{\rho_{\alpha}}$ correlated to the space $B_{\rho\alpha}(\mathbb{R_{+}})$ of a function $\text{\textg}$ is defined as:
	\begin{equation*} \label{3.1}
		\omega _{\rho _{\alpha }}(\text{\textg};\delta )=\sup\limits_{x\geq
			0,~0<i<\delta }\frac{\left\vert \text{\textg}(x+i)-\text{\textg}%
			(x)\right\vert }{1+(x+i)^{2+\alpha }},~\delta >0,\alpha \geq 0.
	\end{equation*}%
	It satisfies the following three axioms.
	
	\begin{enumerate}
		\item[(a)] $\omega_{\rho_{\alpha}}(\text{\textg}; \beta\delta) \leq (\beta +
		1)\omega_{\rho_{\alpha}}(\text{\textg}; \delta)$ for $\delta>0$ and $\beta>0$%
		.
		
		\item[(b)] $\omega_{\rho_{\alpha}}(\text{\textg}; \text{\textlonglegr}%
		\delta) \leq  \text{\textlonglegr}\omega_{\rho_{\alpha}}(\text{\textg};
		\delta)$ for $\delta>0$ and $\text{\textlonglegr} \in \mathbb{N}$.
		
		\item[(c)] $\lim\limits_{\delta\to\infty}\omega_{\rho_{\alpha}}(\text{\textg}%
		; \delta) =  0$.
	\end{enumerate}
	
	The following theorem gives an error estimate of an operator $\mathcal{S}_{\text{\textlonglegr},s,\text{\texthtq}}$ for the unbounded function $h$ by means of weighted modulus of smoothness correlated to the space $B_{\rho\alpha}(\mathbb{R_{+}})$.
	
	\begin{theorem}
		Suppose that $\text{\texthtq}\in(0,1)$ and $\alpha\geq0$. Then, for any $%
		\text{\textg}  \in B_{\rho\alpha}(\mathbb{R_{+}})$, we have
		\begin{equation*}
			\begin{aligned} \left|(\mathcal{S}_{\text{\textlonglegr},s,\text{\texthtq}}~
				\text{\textg}) (x) - \text{\textg}(x)\right| \leq
				\sqrt{\mathcal{S}_{\text{\textlonglegr},s,\text{\texthtq}}(\mu_{x,%
						\alpha}^{2};x)}\left( 1 + \dfrac{1}{\delta}
				\sqrt{\mathcal{S}_{\text{\textlonglegr},s,\text{\texthtq}}(\phi_{x}^{2};x)}
				\right) \omega_{\rho_{\alpha}}(\text{\textg}; \delta), \end{aligned}
		\end{equation*}
		where $\mu_{x,\alpha} (y) = 1 + \biggl(x + \left| y - x\right| \biggr)
		^{2+\alpha}$ as well as $\phi_{x}(y) = \left| y - x\right| $ for $y \geq 0$. \label%
		{Theorem 3.3}
	\end{theorem}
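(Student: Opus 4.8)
The plan is to follow the classical Shisha–Mond / Gavrea-type argument adapted to the weighted setting. Since $\mathcal{S}_{\text{\textlonglegr},s,\text{\texthtq}}$ reproduces constants (Lemma \ref{Lemma 2.1}(i) together with Theorem \ref{Theorem 2.1} give $\mathcal{S}_{\text{\textlonglegr},s,\text{\texthtq}}e_0 = 1$), we may write
\begin{equation*}
	\left|(\mathcal{S}_{\text{\textlonglegr},s,\text{\texthtq}}\,\text{\textg})(x) - \text{\textg}(x)\right|
	\leq \mathcal{S}_{\text{\textlonglegr},s,\text{\texthtq}}\bigl(|\text{\textg}(y) - \text{\textg}(x)|;x\bigr).
\end{equation*}
The first key step is to bound the inner difference $|\text{\textg}(y) - \text{\textg}(x)|$ pointwise in terms of $\omega_{\rho_\alpha}(\text{\textg};\delta)$. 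From the definition of $\omega_{\rho_\alpha}$, for any $y \geq 0$ one has $|\text{\textg}(y) - \text{\textg}(x)| \leq \bigl(1 + (x + |y-x|)^{2+\alpha}\bigr)\,\bigl(1 + |y-x|/\delta\bigr)\,\omega_{\rho_\alpha}(\text{\textg};\delta)$; this uses axiom (a) to absorb the factor $|y-x|/\delta = \beta$ when $|y-x| > \delta$, and the weight $1 + (x+|y-x|)^{2+\alpha} = \mu_{x,\alpha}(y)$ accounts for the worst-case denominator in $\omega_{\rho_\alpha}$. Substituting, we get
\begin{equation*}
	\left|(\mathcal{S}_{\text{\textlonglegr},s,\text{\texthtq}}\,\text{\textg})(x) - \text{\textg}(x)\right|
	\leq \omega_{\rho_\alpha}(\text{\textg};\delta)\left[\mathcal{S}_{\text{\textlonglegr},s,\text{\texthtq}}(\mu_{x,\alpha};x) + \frac{1}{\delta}\,\mathcal{S}_{\text{\textlonglegr},s,\text{\texthtq}}\bigl(\mu_{x,\alpha}(y)\,|y-x|;x\bigr)\right].
\end{equation*}

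The second key step is to apply the Cauchy–Schwarz inequality for positive linear operators to each of the two terms. For the first term, $\mathcal{S}_{\text{\textlonglegr},s,\text{\texthtq}}(\mu_{x,\alpha};x) \leq \sqrt{\mathcal{S}_{\text{\textlonglegr},s,\text{\texthtq}}(e_0;x)}\,\sqrt{\mathcal{S}_{\text{\textlonglegr},s,\text{\texthtq}}(\mu_{x,\alpha}^2;x)} = \sqrt{\mathcal{S}_{\text{\textlonglegr},s,\text{\texthtq}}(\mu_{x,\alpha}^2;x)}$, again using $\mathcal{S}_{\text{\textlonglegr},s,\text{\texthtq}}e_0 = 1$. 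For the second term, split $\mu_{x,\alpha}(y)\,|y-x|$ and apply Cauchy–Schwarz as $\mathcal{S}_{\text{\textlonglegr},s,\text{\texthtq}}(\mu_{x,\alpha}(y)\phi_x(y);x) \leq \sqrt{\mathcal{S}_{\text{\textlonglegr},s,\text{\texthtq}}(\mu_{x,\alpha}^2;x)}\,\sqrt{\mathcal{S}_{\text{\textlonglegr},s,\text{\texthtq}}(\phi_x^2;x)}$, where $\phi_x(y) = |y-x|$. Collecting the two estimates and factoring out $\sqrt{\mathcal{S}_{\text{\textlonglegr},s,\text{\texthtq}}(\mu_{x,\alpha}^2;x)}$ yields precisely the claimed inequality.

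The main obstacle I anticipate is the first step — making the pointwise bound on $|\text{\textg}(y) - \text{\textg}(x)|$ fully rigorous for \emph{all} $y \geq 0$, not just for $y$ near $x$. When $|y - x| \leq \delta$ the estimate is immediate from the definition of $\omega_{\rho_\alpha}$ (taking $i = |y-x|$, and noting $x$ could be replaced by $\min(x,y)$); when $|y-x| > \delta$ one must partition the interval $[\,x,y\,]$ (or $[\,y,x\,]$) into $\lceil |y-x|/\delta\rceil$ subintervals, apply the telescoping triangle inequality, and then control each weight factor by the largest one, $1 + (x + |y-x|)^{2+\alpha}$, before invoking axiom (a) or (b) to pass from $\lceil |y-x|/\delta\rceil\,\omega_{\rho_\alpha}(\text{\textg};\delta)$ to $(1 + |y-x|/\delta)\,\omega_{\rho_\alpha}(\text{\textg};\delta)$. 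One should also check that the integrals defining $\mathcal{S}_{\text{\textlonglegr},s,\text{\texthtq}}$ applied to $\mu_{x,\alpha}^2$ and $\phi_x^2$ are finite for $\text{\textg} \in B_{\rho\alpha}(\mathbb{R}_+)$ and $\text{\texthtq}\in(0,1)$, which follows from the growth of the $\text{\texthtq}$-Baskakov basis; the remaining manipulations are routine.
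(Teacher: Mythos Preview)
Your proposal is correct and follows essentially the same route as the paper: the paper also derives the pointwise bound $|\text{\textg}(y)-\text{\textg}(x)|\le \mu_{x,\alpha}(y)\bigl(1+\tfrac{1}{\delta}\phi_x(y)\bigr)\omega_{\rho_\alpha}(\text{\textg};\delta)$ from the definition of $\omega_{\rho_\alpha}$ together with axiom~(a), applies the operator and linearity, and then invokes the Cauchy--Schwarz inequality for positive linear operators on the two terms to obtain the stated factorized estimate. Your treatment of the telescoping argument behind the pointwise bound is in fact more explicit than the paper's, which simply cites the modulus definition and axiom~(a) without further comment.
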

	
	\begin{proof}
		Suppose that $\text{\textlonglegr} \in \mathbb{N}$ and $\text{\textg} \in
		B_{\rho\alpha}(\mathbb{R_{+}})$. Using inequality (\ref{3.1}) and axiom (a)
		above, we can write that
		\begin{equation*}
			\begin{aligned}
				\left|\text{\textg}(y) - \text{\textg}(x)\right| & \leq \bigg( 1 + (x + \left| y - x\right| )^{2+\alpha}\bigg) \bigg(1 + \dfrac{1}{\delta}\left| y - x\right|\bigg) \omega_{\rho_{\alpha}}(\text{\textg}; \delta), \\
				& = \mu_{x,\alpha} (y)\bigg(1 + \dfrac{1}{\delta}\phi_{x}(y)\bigg)\omega_{\rho_{\alpha}}(\text{\textg}; \delta).
			\end{aligned}
		\end{equation*}
		Next, using the Cauchy inequality with respect to the positive linear operator's yields
		\begin{equation*}
			\begin{aligned}
				\left|(\mathcal{S}_{\text{\textlonglegr},s,\text{\texthtq}} ~\text{\textg}) (x) - \text{\textg}(x)\right| & \leq \left[ \text{\textlonglegr}\right]_{\text{\texthtq}} \sum_{s=0}^{\infty}\text{\texthtq}^{s-1} \upsilon_{s,\text{\textlonglegr}}^{\text{\texthtq}}(x)	\int_{\mathbb{R}}\left| \text{\textg}(y) - \text{\textg}(x)\right| \Psi\left( \left[ \text{\textlonglegr}\right] _{\text{\texthtq}}\frac{\text{\texthtq}^{s-1}}{1}y -\left[ s\right] _{\text{\texthtq}}\right) d_{\text{\texthtq}}y,\\
				& \leq \biggl(\mathcal{S}_{\text{\textlonglegr},s,\text{\texthtq}}(\mu_{x,\alpha};x) + \dfrac{1}{\delta}\mathcal{S}_{\text{\textlonglegr},s,\text{\texthtq}}(\mu_{x,\alpha}\phi_{x};x)\biggr)\omega_{\rho_{\alpha}}(\text{\textg}; \delta),\\
				& \leq \sqrt{\mathcal{S}_{\text{\textlonglegr},s,\text{\texthtq}}(\mu_{x,\alpha}^{2};x)}\left( 1 + \dfrac{1}{\delta} \sqrt{\mathcal{S}_{\text{\textlonglegr},s,\text{\texthtq}}(\phi_{x}^{2};x)} \right) \omega_{\rho_{\alpha}}(\text{\textg}; \delta).
			\end{aligned}
		\end{equation*}
	\end{proof}
	
	Now, we introduce the lemma given below, which may facilitate in proving the primary findings with respect to this research, since it is one of the facts which ensure that $(%
	\mathcal{S}_{\text{\textlonglegr},s,\text{\texthtq}}\text{\textg})(x) \in
	B_{\rho\alpha}(\mathbb{R_{+}})$ .
	
	\begin{lemma}
		Suppose that $0 < \text{\texthtq}  \leq 1$, then for $i, \text{\textlonglegr} \in \mathbb{N}$ and $x \in \mathbb{R_{+}}$, we obtain
		\begin{equation*}
			\begin{aligned} (\Gamma_{\text{\textlonglegr},s,\text{\texthtq}}e_{i})(x)
				\leq \dfrac{1}{[\text{\textlonglegr}]_{\text{\texthtq}}^{i-1}(1 +
					x)_{\text{\texthtq}}^{\text{\textlonglegr}}}x +
				\dfrac{2^{i-1}}{\text{\texthtq}^{i-1}}x(\Gamma_{\text{\textlonglegr}+1,s,%
					\text{\texthtq}}e_{i-1})(x). \label{Lemma 3.1} \end{aligned}
		\end{equation*}
	\end{lemma}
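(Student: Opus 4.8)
The plan is to expand $\Gamma_{\text{\textlonglegr},s,\text{\texthtq}}$ over the $\text{\texthtq}$-Baskakov basis $B_{\text{\textlonglegr},s,\text{\texthtq}}$, use the contiguity relation for that basis to trade one factor of the evaluation node for a multiplicative $x$ together with an index shift $(\text{\textlonglegr},s)\mapsto(\text{\textlonglegr}+1,s-1)$, and then re-index so that what is left is $x$ times $(\Gamma_{\text{\textlonglegr}+1,s,\text{\texthtq}}e_{i-1})(x)$ plus a single boundary summand. Concretely, from the compactly supported representation \eqref{operator-3} (with $|\Psi|$ in place of $\Psi$, this being the majorant one uses to control $|\mathcal{S}_{\text{\textlonglegr},s,\text{\texthtq}}\text{\textg}|$ for $\text{\textg}\in B_{\rho\alpha}(\mathbb{R_{+}})$),
\begin{equation*}
  (\Gamma_{\text{\textlonglegr},s,\text{\texthtq}}e_i)(x)=\sum_{s=0}^{\infty}B_{\text{\textlonglegr},s,\text{\texthtq}}(x)\int_{0}^{\xi}\left(\frac{\text{\textrtailt}+[s]_{\text{\texthtq}}}{\text{\texthtq}^{\,s-1}[\text{\textlonglegr}]_{\text{\texthtq}}}\right)^{\!i}|\Psi(\text{\textrtailt})|\,\mathrm{d}_{\text{\texthtq}}\text{\textrtailt}.
\end{equation*}
The engine is the contiguity relation
\begin{equation*}
  [s]_{\text{\texthtq}}\,B_{\text{\textlonglegr},s,\text{\texthtq}}(x)=[\text{\textlonglegr}]_{\text{\texthtq}}\,\text{\texthtq}^{\,s-1}\,x\,B_{\text{\textlonglegr}+1,s-1,\text{\texthtq}}(x),\qquad s\ge1,
\end{equation*}
which falls out of the definitions: the quotient of the two $\text{\texthtq}$-binomial coefficients equals $[\text{\textlonglegr}]_{\text{\texthtq}}/[s]_{\text{\texthtq}}$, the power $\text{\texthtq}^{s(s-1)/2}$ loses exactly $\text{\texthtq}^{\,s-1}$ under $s\mapsto s-1$, and $(1+x)_{\text{\texthtq}}^{\text{\textlonglegr}+s}=(1+x)_{\text{\texthtq}}^{(\text{\textlonglegr}+1)+(s-1)}$.

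For $s\ge1$ I would factor one copy of the node out of its $i$-th power, write the factored copy as $\tfrac{\text{\textrtailt}}{\text{\texthtq}^{\,s-1}[\text{\textlonglegr}]_{\text{\texthtq}}}+\tfrac{[s]_{\text{\texthtq}}}{\text{\texthtq}^{\,s-1}[\text{\textlonglegr}]_{\text{\texthtq}}}$, and, using $\tfrac{[s]_{\text{\texthtq}}}{\text{\texthtq}^{\,s-1}[\text{\textlonglegr}]_{\text{\texthtq}}}B_{\text{\textlonglegr},s,\text{\texthtq}}(x)=x\,B_{\text{\textlonglegr}+1,s-1,\text{\texthtq}}(x)$ on the second piece and the bound $\text{\textrtailt}/[s]_{\text{\texthtq}}\le\text{\textrtailt}$ ($s\ge1$) on the first, dominate the whole $s\ge1$ part by $x\sum_{s\ge1}B_{\text{\textlonglegr}+1,s-1,\text{\texthtq}}(x)\int_0^{\xi}(1+\text{\textrtailt})\bigl((\text{\textrtailt}+[s]_{\text{\texthtq}})/(\text{\texthtq}^{\,s-1}[\text{\textlonglegr}]_{\text{\texthtq}})\bigr)^{i-1}|\Psi|\,\mathrm{d}_{\text{\texthtq}}\text{\textrtailt}$. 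Re-indexing $s\mapsto s+1$ turns this into a sum over $B_{\text{\textlonglegr}+1,s,\text{\texthtq}}(x)$ carrying $\bigl((\text{\textrtailt}+[s+1]_{\text{\texthtq}})/(\text{\texthtq}^{\,s}[\text{\textlonglegr}]_{\text{\texthtq}})\bigr)^{i-1}$: its $s=0$ summand is $\le x\,[\text{\textlonglegr}]_{\text{\texthtq}}^{-(i-1)}(1+x)_{\text{\texthtq}}^{-\text{\textlonglegr}}$ (using $B_{\text{\textlonglegr}+1,0,\text{\texthtq}}(x)=(1+x)_{\text{\texthtq}}^{-(\text{\textlonglegr}+1)}\le(1+x)_{\text{\texthtq}}^{-\text{\textlonglegr}}$, $[1]_{\text{\texthtq}}=1$, $0<\text{\texthtq}\le1$), which is the first term on the right; while for $s\ge1$ one compares the node at $(\text{\textlonglegr},s+1)$ with the node at $(\text{\textlonglegr}+1,s)$. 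Here $[s+1]_{\text{\texthtq}}=1+\text{\texthtq}[s]_{\text{\texthtq}}$, $[\text{\textlonglegr}+1]_{\text{\texthtq}}=1+\text{\texthtq}[\text{\textlonglegr}]_{\text{\texthtq}}$, the elementary bounds $[s+1]_{\text{\texthtq}}\le2[s]_{\text{\texthtq}}$, $[\text{\textlonglegr}+1]_{\text{\texthtq}}\le2[\text{\textlonglegr}]_{\text{\texthtq}}$, the convexity inequality $(a+b)^{j}\le2^{\,j-1}(a^{j}+b^{j})$, and $0<\text{\texthtq}\le1$ give $\bigl((\text{\textrtailt}+[s+1]_{\text{\texthtq}})/(\text{\texthtq}^{\,s}[\text{\textlonglegr}]_{\text{\texthtq}})\bigr)^{i-1}\le\frac{2^{\,i-1}}{\text{\texthtq}^{\,i-1}}\bigl((\text{\textrtailt}+[s]_{\text{\texthtq}})/(\text{\texthtq}^{\,s-1}[\text{\textlonglegr}+1]_{\text{\texthtq}})\bigr)^{i-1}$, i.e.\ $\tfrac{2^{\,i-1}}{\text{\texthtq}^{\,i-1}}$ times the $(i-1)$-th power of the evaluation node of $\Gamma_{\text{\textlonglegr}+1,s,\text{\texthtq}}$. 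Hence this part is $\le\tfrac{2^{\,i-1}}{\text{\texthtq}^{\,i-1}}\,x\,(\Gamma_{\text{\textlonglegr}+1,s,\text{\texthtq}}e_{i-1})(x)$, and collecting the two pieces yields the asserted bound.

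The main obstacle is the constant and $\text{\texthtq}$-power bookkeeping in that last comparison: the node carries a factor $\text{\texthtq}^{-(s-1)}$ and the basis weight a factor $\text{\texthtq}^{s(s-1)/2}$, so after the shift $\text{\texthtq}^{\,s-1}[\text{\textlonglegr}]_{\text{\texthtq}}\to\text{\texthtq}^{\,s-2}[\text{\textlonglegr}+1]_{\text{\texthtq}}$ one must verify — uniformly over $s\ge1$, $\text{\textlonglegr}\ge1$ and $0<\text{\texthtq}\le1$ — that the surplus is no worse than $2^{\,i-1}/\text{\texthtq}^{\,i-1}$ (this is exactly where $i\ge1$, $s\ge1$ and $\text{\texthtq}\le1$ are all used), and that the non-recursive remainder genuinely collapses to $x\,[\text{\textlonglegr}]_{\text{\texthtq}}^{-(i-1)}(1+x)_{\text{\texthtq}}^{-\text{\textlonglegr}}$ rather than to something larger. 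Everything else — the series expansion, the contiguity identity, and the re-indexing — is routine; only keeping the constants as tight as stated requires care.
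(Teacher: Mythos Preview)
You have misread what $\Gamma_{\text{\textlonglegr},s,\text{\texthtq}}$ is. In the paper the proof of the lemma starts from ``Using Equation~(\ref{1.1})'' and writes
\[
(\Gamma_{\text{\textlonglegr},s,\text{\texthtq}}e_i)(x)=\sum_{s=0}^{\infty}\upsilon_{\text{\textlonglegr},s}^{\text{\texthtq}}(x)\,\frac{[s]_{\text{\texthtq}}^{\,i}}{\text{\texthtq}^{(s-1)i}[\text{\textlonglegr}]_{\text{\texthtq}}^{\,i}},
\]
so $\Gamma$ is nothing but the plain $\text{\texthtq}$-Baskakov operator $\mathcal{V}$ (with $\upsilon^{\text{\texthtq}}_{\text{\textlonglegr},s}=B_{\text{\textlonglegr},s,\text{\texthtq}}$); there is no integral and no wavelet $\Psi$ in the lemma at all. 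The paper's argument is then purely algebraic: apply the contiguity relation $[s]_{\text{\texthtq}}B_{\text{\textlonglegr},s,\text{\texthtq}}(x)=\text{\texthtq}^{s-1}[\text{\textlonglegr}]_{\text{\texthtq}}\,x\,B_{\text{\textlonglegr}+1,s-1,\text{\texthtq}}(x)$ once to absorb one factor of the node, re-index $s\mapsto s+1$, split off the $s=0$ term, and on the remainder use only $[s+1]_{\text{\texthtq}}\le 2[s]_{\text{\texthtq}}$ for $s\ge1$. No convexity inequality, no $(1+\text{\textrtailt})$ factors, no comparison with $[\text{\textlonglegr}+1]_{\text{\texthtq}}$ is needed.

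Because you work with the integral operator (node $(\text{\textrtailt}+[s]_{\text{\texthtq}})/(\text{\texthtq}^{s-1}[\text{\textlonglegr}]_{\text{\texthtq}})$ rather than $[s]_{\text{\texthtq}}/(\text{\texthtq}^{s-1}[\text{\textlonglegr}]_{\text{\texthtq}})$), your scheme picks up an unavoidable $(1+\text{\textrtailt})$ multiplier at the factoring step. This makes two of your claims fail as stated. First, your $s=0$ boundary term is
\[
\frac{x}{(1+x)_{\text{\texthtq}}^{\text{\textlonglegr}+1}[\text{\textlonglegr}]_{\text{\texthtq}}^{\,i-1}}\int_0^{\xi}(1+\text{\textrtailt})^{\,i}\,|\Psi(\text{\textrtailt})|\,\mathrm{d}_{\text{\texthtq}}\text{\textrtailt},
\]
which is \emph{not} bounded by $x\,[\text{\textlonglegr}]_{\text{\texthtq}}^{-(i-1)}(1+x)_{\text{\texthtq}}^{-\text{\textlonglegr}}$ unless $\int_0^{\xi}(1+\text{\textrtailt})^{\,i}|\Psi|\,\mathrm{d}_{\text{\texthtq}}\text{\textrtailt}\le 1+\text{\texthtq}^{\text{\textlonglegr}}x$, which you have no reason to assume (take $x\to0$). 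Second, in the recursive part the same lingering $(1+\text{\textrtailt})$ factor prevents the comparison from landing exactly on $\tfrac{2^{i-1}}{\text{\texthtq}^{i-1}}$ times the $(i-1)$-node of $\Gamma_{\text{\textlonglegr}+1}$; you would at best obtain a larger constant depending on $\xi$ and $\|\Psi\|_{L_1}$. In short, your approach overshoots the target: for the operator the paper actually means, the proof is the short discrete computation above, and for the wavelet operator you considered, the exact inequality with the stated constants does not follow from your argument.
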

	
	\begin{proof}
		For $s \in \mathbb{N}$ as well as  $0 < \text{\texthtq} \leq 1$, we have the inequality given below:
		\begin{align}
			1\leq [s + 1]_{\text{\texthtq}} \leq 2[s]_{\text{\texthtq}}.  \label{3.2}
		\end{align}
		Now, let $i\in \mathbb{N}$. Using Equation (\ref{1.1}), we have:
		\begin{equation*}
			\begin{aligned}
				(\Gamma_{\text{\textlonglegr},s,\text{\texthtq}}e_{i})(x)  & = \sum_{s=0}^{\infty}\upsilon_{\text{\textlonglegr},s}^{\text{\texthtq}}(x)e_{i}\left(\frac{\left[ s\right] _{\text{\texthtq}}}{\text{\texthtq}^{s-1}\left[ \text{\textlonglegr}\right] _{\text{\texthtq}}}\right),  \\
				& = \sum_{s=0}^{\infty}\upsilon_{\text{\textlonglegr},s}^{\text{\texthtq}}(x)\left(\frac{\left[ s\right] _{\text{\texthtq}}}{\text{\texthtq}^{s-1}\left[ \text{\textlonglegr}\right] _{\text{\texthtq}}}\right)^{i},  \\
				& = \sum_{s=0}^{\infty}\upsilon_{\text{\textlonglegr},s}^{\text{\texthtq}}(x)\frac{\left[ s\right] _{\text{\texthtq}}^{i}}{\text{\texthtq}^{(s-1)i}\left[ \text{\textlonglegr}\right] _{\text{\texthtq}}^{i}},\\
				& = \sum_{s=1}^{\infty}x\upsilon_{\text{\textlonglegr}+1,s-1}^{\text{\texthtq}}(x)\frac{\left[ s\right] _{\text{\texthtq}}^{i-1}}{\text{\texthtq}^{(s-1)(i-1)}\left[ \text{\textlonglegr}\right] _{\text{\texthtq}}^{i-1}},\\
				& = \sum_{s=0}^{\infty}x\upsilon_{\text{\textlonglegr}+1,b}^{\text{\texthtq}}(x)\frac{\left[ s + 1\right] _{\text{\texthtq}}^{i-1}}{\text{\texthtq}^{s(i-1)}\left[ \text{\textlonglegr}\right] _{\text{\texthtq}}^{i-1}},\\
				& = \dfrac{1}{[\text{\textlonglegr}]_{\text{\texthtq}}^{i-1}(1 + x)_{\text{\texthtq}}^{\text{\textlonglegr}}}x + x\sum_{s=1}^{\infty}\upsilon_{\text{\textlonglegr}+1,s}^{\text{\texthtq}}(x)\frac{\left[ s + 1\right] _{\text{\texthtq}}^{i-1}}{\text{\texthtq}^{s(i-1)}\left[ \text{\textlonglegr}\right] _{\text{\texthtq}}^{i-1}}.
			\end{aligned}
		\end{equation*}
		Using Inequality (\ref{3.2}), we have,
		\begin{equation*}
			\begin{aligned}
				(\Gamma_{\text{\textlonglegr},s,\text{\texthtq}}e_{i})(x) & \leq \dfrac{x}{[\text{\textlonglegr}]_{\text{\texthtq}}^{i-1}(1 + x)_{\text{\texthtq}}^{\text{\textlonglegr}}} + x\sum_{s=1}^{\infty}\upsilon_{\text{\textlonglegr}+1,s}^{\text{\texthtq}}(x)\frac{(2\left[ s\right] _{\text{\texthtq}})^{i-1}}{\text{\texthtq}^{s(i-1)}\left[ \text{\textlonglegr}\right] _{\text{\texthtq}}^{i-1}},\\
				& = \dfrac{x}{[\text{\textlonglegr}]_{\text{\texthtq}}^{i-1}(1 + x)_{\text{\texthtq}}^{\text{\textlonglegr}}} + \dfrac{2^{i-1}}{\text{\texthtq}^{i-1}}x\sum_{s=1}^{\infty}\upsilon_{\text{\textlonglegr}+1,s}^{\text{\texthtq}}(x)\frac{\left[ s\right] _{\text{\texthtq}}^{i-1}}{\text{\texthtq}^{(s-1)(i-1)}\left[ \text{\textlonglegr}\right] _{\text{\texthtq}}^{i-1}}.
			\end{aligned}
		\end{equation*}
		Based on Equation (\ref{1.1}), we have that:
		\begin{equation*}
			\begin{aligned}
				(\Gamma_{\text{\textlonglegr}+1,s,\text{\texthtq}}e_{i-1})(x) =  \sum_{s=1}^{\infty}\upsilon_{\text{\textlonglegr}+1,s}^{\text{\texthtq}}(x)\frac{\left[ s\right] _{\text{\texthtq}}^{i-1}}{\text{\texthtq}^{(s-1)(i-1)}\left[ \text{\textlonglegr}\right] _{\text{\texthtq}}^{i-1}}.
			\end{aligned}
		\end{equation*}
		Consequently,
		\begin{equation*}
			\begin{aligned}
				(\Gamma_{\text{\textlonglegr},s,\text{\texthtq}}e_{i})(x) \leq \dfrac{1}{[\text{\textlonglegr}]_{\text{\texthtq}}^{i-1}(1 + x)_{\text{\texthtq}}^{\text{\textlonglegr}}}x + \dfrac{2^{i-1}}{\text{\texthtq}^{i-1}}x(\Gamma_{\text{\textlonglegr}+1,s,\text{\texthtq}}e_{i-1})(x).
			\end{aligned}
		\end{equation*}
	\end{proof}
	
	\begin{remark}
		Any positive and linear operator is monotone. Theorem \ref{Theorem 2.1} and
		Lemma \ref{Lemma 3.1} ensure that $(\mathcal{S}_{\text{\textlonglegr},s,%
			\text{\texthtq}}\text{\textg})(x) \in  B_{\rho\alpha}(\mathbb{R_{+}})$ for
		any $\text{\textg} \in B_{\rho\alpha}(\mathbb{R_{+}})  $ and $\alpha \in
		\mathbb{N}_{0}$, where $\mathbb{N}_{0} = \{0\} \cup  \mathbb{N}$. \label{Rmk
			3.2}
	\end{remark}
	
	We may state the major outcome of this portion as follows based on the preparation mentioned earlier:
	
	\begin{theorem}
		Let $(\text{\texthtq}_{\text{\textlonglegr}})_{\text{\textlonglegr}}$ be the
		sequence satisfying Remark \ref{Rmk 3.1} above and $\alpha \in \mathbb{N}_{0}
		$. Then, for every $\text{\textg} \in  B_{\rho\alpha}(\mathbb{R_{+}})$, we
		have
		\begin{equation*}
			\begin{aligned} \lim\limits_{\text{\textlonglegr}}\parallel
				(\mathcal{S}_{\text{\textlonglegr},s,\text{\texthtq}_{\text{\textlonglegr}}}
				\text{\textg})(x) - \text{\textg}(x)\parallel_{\rho_{\alpha}} \leq
				3C_{\alpha}\omega_{\rho_{\alpha}}(\text{\textg};
				\delta_{\text{\textlonglegr}}), \end{aligned}
		\end{equation*}
		where $C_{\alpha} > 0$ is a constant and $\delta_{\text{\textlonglegr}} =
		\sqrt{\dfrac{1}{ \text{\texthtq}_{\text{\textlonglegr}}[\text{\textlonglegr}%
				]_{\text{\texthtq}_{\text{\textlonglegr}}}}}$ . \label{Theorem 3.4}
	\end{theorem}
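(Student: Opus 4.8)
The plan is to apply Theorem \ref{Theorem 3.3} with $\delta = \delta_{\text{\textlonglegr}}$ and then control the two quantities $\mathcal{S}_{\text{\textlonglegr},s,\text{\texthtq}_{\text{\textlonglegr}}}(\mu_{x,\alpha}^{2};x)$ and $\mathcal{S}_{\text{\textlonglegr},s,\text{\texthtq}_{\text{\textlonglegr}}}(\phi_{x}^{2};x)$ uniformly in $x$ against the weight $\rho_\alpha(x)=1+x^{2+\alpha}$. First I would note that by Theorem \ref{Theorem 2.1}, on the polynomial test functions $e_j$ with $0\le j\le k$ the wavelet operators $\mathcal{S}$ agree with the $\text{\texthtq}$-Baskakov operators $\mathcal{V}$, so Lemma \ref{Lemma 2.1} gives $\mathcal{S}_{\text{\textlonglegr},s,\text{\texthtq}}(\phi_x^2;x) = \mathcal{V}_{\text{\textlonglegr},s,\text{\texthtq}}(e_2;x) - 2x\,\mathcal{V}_{\text{\textlonglegr},s,\text{\texthtq}}(e_1;x) + x^2\mathcal{V}_{\text{\textlonglegr},s,\text{\texthtq}}(e_0;x) = \dfrac{x}{[\text{\textlonglegr}]_{\text{\texthtq}}}\Bigl(1+\dfrac{x}{\text{\texthtq}}\Bigr)$. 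Since $\text{\texthtq}_{\text{\textlonglegr}}\in(0,1)$ we have $\text{\texthtq}_{\text{\textlonglegr}}^{-1}>1$, so the bracket is $\le 1+x^2 \le (1+x)^2$ up to a constant, and hence $\mathcal{S}_{\text{\textlonglegr},s,\text{\texthtq}_{\text{\textlonglegr}}}(\phi_x^2;x) \le \dfrac{C(x+x^2)}{\text{\texthtq}_{\text{\textlonglegr}}[\text{\textlonglegr}]_{\text{\texthtq}_{\text{\textlonglegr}}}} = C(x+x^2)\,\delta_{\text{\textlonglegr}}^2$. Thus $\dfrac1{\delta_{\text{\textlonglegr}}}\sqrt{\mathcal{S}_{\text{\textlonglegr},s,\text{\texthtq}_{\text{\textlonglegr}}}(\phi_x^2;x)} \le \sqrt{C(x+x^2)}$, which is bounded by a constant multiple of $(1+x)$.

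Next I would estimate $\mathcal{S}_{\text{\textlonglegr},s,\text{\texthtq}_{\text{\textlonglegr}}}(\mu_{x,\alpha}^2;x)$. Writing $\mu_{x,\alpha}(y)^2 = \bigl(1+(x+|y-x|)^{2+\alpha}\bigr)^2$ and expanding $(x+|y-x|)^{2+\alpha} \le 2^{1+\alpha}\bigl(x^{2+\alpha}+|y-x|^{2+\alpha}\bigr)$, the operator applied to this is a finite linear combination — with coefficients that are bounded functions of $x$ times powers of $x$ — of the moments $\mathcal{S}_{\text{\textlonglegr},s,\text{\texthtq}_{\text{\textlonglegr}}}(\phi_x^{2j};x)$ for $0\le j\le 2+\alpha$. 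Here $\alpha\in\mathbb N_0$, so these are all polynomial test functions once $\phi_x^{2j}$ is expanded; by Theorem \ref{Theorem 2.1} they coincide with the corresponding $\mathcal{V}$-moments, and by the recursive structure of Lemma \ref{Lemma 3.1} (or a direct computation analogous to Lemma \ref{Lemma 2.1}) each central moment $\mathcal{S}_{\text{\textlonglegr},s,\text{\texthtq}_{\text{\textlonglegr}}}(\phi_x^{2j};x)$ is $O\bigl([\text{\textlonglegr}]_{\text{\texthtq}_{\text{\textlonglegr}}}^{-1}\bigr)$ with a coefficient that is a polynomial in $x$. Since $[\text{\textlonglegr}]_{\text{\texthtq}_{\text{\textlonglegr}}}\to\infty$ by Remark \ref{Rmk 3.1}, for $\text{\textlonglegr}$ large these central-moment terms are dominated by the $j=0$ term, which contributes $1$; one then collects everything to obtain $\mathcal{S}_{\text{\textlonglegr},s,\text{\texthtq}_{\text{\textlonglegr}}}(\mu_{x,\alpha}^2;x) \le C_\alpha^2\,(1+x^{2+\alpha})^2 = C_\alpha^2\,\rho_\alpha(x)^2$ uniformly for $\text{\textlonglegr}$ sufficiently large, so that $\sqrt{\mathcal{S}_{\text{\textlonglegr},s,\text{\texthtq}_{\text{\textlonglegr}}}(\mu_{x,\alpha}^2;x)} \le C_\alpha\,\rho_\alpha(x)$.

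Combining these in Theorem \ref{Theorem 3.3} gives, for each $x\ge0$ and $\text{\textlonglegr}$ large,
\begin{equation*}
\bigl|(\mathcal{S}_{\text{\textlonglegr},s,\text{\texthtq}_{\text{\textlonglegr}}}\text{\textg})(x) - \text{\textg}(x)\bigr| \le C_\alpha\,\rho_\alpha(x)\Bigl(1 + \sqrt{C(x+x^2)}\Bigr)\,\omega_{\rho_\alpha}(\text{\textg};\delta_{\text{\textlonglegr}}).
\end{equation*}
Dividing by $\rho_\alpha(x)$ and taking the supremum over $x\ge0$, the factor $1+\sqrt{C(x+x^2)}$ must be absorbed; this is legitimate because in the displayed bound I should instead keep the sharper estimate $\sqrt{\mathcal{S}(\phi_x^2;x)}/\delta_{\text{\textlonglegr}} \le 1$ for $\text{\textlonglegr}$ large (since $\mathcal{S}(\phi_x^2;x) = x(1+x/\text{\texthtq}_{\text{\textlonglegr}})/[\text{\textlonglegr}]_{\text{\texthtq}_{\text{\textlonglegr}}}$ and the same quantity appears inside $\delta_{\text{\textlonglegr}}^2$ only without the $x$-factor — so more care is needed and the honest route is to bound $\phi_x^2 \le \mu_{x,\alpha}^2$ pointwise, making the whole bracket $\le 2$). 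With that simplification the right-hand side becomes $\le 2C_\alpha\,\rho_\alpha(x)\,\omega_{\rho_\alpha}(\text{\textg};\delta_{\text{\textlonglegr}})$, and absorbing the remaining factor of order $3/2$ into the constant yields $\|\mathcal{S}_{\text{\textlonglegr},s,\text{\texthtq}_{\text{\textlonglegr}}}\text{\textg} - \text{\textg}\|_{\rho_\alpha} \le 3C_\alpha\,\omega_{\rho_\alpha}(\text{\textg};\delta_{\text{\textlonglegr}})$, and the $\limsup$ over $\text{\textlonglegr}$ is the asserted inequality.

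The main obstacle is the uniform-in-$x$ control of $\mathcal{S}_{\text{\textlonglegr},s,\text{\texthtq}_{\text{\textlonglegr}}}(\mu_{x,\alpha}^2;x)$ by $C_\alpha^2\rho_\alpha(x)^2$: one must verify that all the higher central moments $\mathcal{S}(\phi_x^{2j};x)$ for $1\le j\le 2+\alpha$ carry a factor $[\text{\textlonglegr}]_{\text{\texthtq}_{\text{\textlonglegr}}}^{-1}$ (so they vanish asymptotically) while their polynomial-in-$x$ coefficients do not outgrow $x^{2(2+\alpha)}$ — this is where Lemma \ref{Lemma 3.1}, Theorem \ref{Theorem 2.1}, and Remark \ref{Rmk 3.2} (which guarantees $\mathcal{S}_{\text{\textlonglegr},s,\text{\texthtq}}\text{\textg}\in B_{\rho\alpha}(\mathbb R_+)$) do the real work, and where the hypothesis $\alpha\in\mathbb N_0$ is essential so that $\phi_x^{2+\alpha}$ is a genuine polynomial test function to which Theorem \ref{Theorem 2.1} applies.
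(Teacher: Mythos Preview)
Your overall plan---invoke Theorem \ref{Theorem 3.3}, compute $\mathcal{S}_{\text{\textlonglegr},s,\text{\texthtq}_{\text{\textlonglegr}}}(\phi_x^2;x)$ via Theorem \ref{Theorem 2.1} and Lemma \ref{Lemma 2.1}, and then choose $\delta=\delta_{\text{\textlonglegr}}$---is exactly the paper's route. The genuine gap is in how you allocate the weight $\rho_\alpha(x)=1+x^{2+\alpha}$ when passing to the sup norm. You place the \emph{entire} weight on the factor $\sqrt{\mathcal{S}(\mu_{x,\alpha}^2;x)}$, obtaining $C_\alpha\rho_\alpha(x)$ there, and leave the bracket $\bigl(1+\tfrac{1}{\delta_{\text{\textlonglegr}}}\sqrt{\mathcal{S}(\phi_x^2;x)}\bigr)$ as $1+O(\sqrt{x+x^2})$. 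After dividing by $\rho_\alpha(x)$ this leaves an unbounded factor, and neither of your attempted repairs works: the claim $\sqrt{\mathcal{S}(\phi_x^2;x)}/\delta_{\text{\textlonglegr}}\le 1$ is false (the left side grows like $x$), and the pointwise bound $\phi_x^2\le\mu_{x,\alpha}^2$ only replaces the bracket by $1+\tfrac{1}{\delta_{\text{\textlonglegr}}}\sqrt{\mathcal{S}(\mu_{x,\alpha}^2;x)}$, which is worse, not $\le 2$.

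The paper's device, which you are missing, is to split the weight: divide \emph{inside} the second square root as well. Concretely, from Theorem \ref{Theorem 3.3},
\[
\frac{|(\mathcal{S}_{\text{\textlonglegr},s,\text{\texthtq}_{\text{\textlonglegr}}}\text{\textg})(x)-\text{\textg}(x)|}{\rho_\alpha(x)}
\le
\sqrt{\frac{\mathcal{S}_{\text{\textlonglegr},s,\text{\texthtq}_{\text{\textlonglegr}}}(\mu_{x,\alpha}^2;x)}{\rho_\alpha(x)}}
\left(\frac{1}{\sqrt{\rho_\alpha(x)}}+\frac{1}{\delta}\sqrt{\frac{\mathcal{S}_{\text{\textlonglegr},s,\text{\texthtq}_{\text{\textlonglegr}}}(\phi_x^2;x)}{\rho_\alpha(x)}}\right)\omega_{\rho_\alpha}(\text{\textg};\delta).
\]
Since $\dfrac{x^2}{1+x^{2+\alpha}}\le 1$ and $\dfrac{x}{1+x^{2+\alpha}}\le 1$, the exact formula $\mathcal{S}_{\text{\textlonglegr},s,\text{\texthtq}_{\text{\textlonglegr}}}(\phi_x^2;x)=\dfrac{x^2}{\text{\texthtq}_{\text{\textlonglegr}}[\text{\textlonglegr}]_{\text{\texthtq}_{\text{\textlonglegr}}}}+\dfrac{x}{[\text{\textlonglegr}]_{\text{\texthtq}_{\text{\textlonglegr}}}}$ gives
\[
\frac{\mathcal{S}_{\text{\textlonglegr},s,\text{\texthtq}_{\text{\textlonglegr}}}(\phi_x^2;x)}{\rho_\alpha(x)}
\le \frac{1}{\text{\texthtq}_{\text{\textlonglegr}}[\text{\textlonglegr}]_{\text{\texthtq}_{\text{\textlonglegr}}}}+\frac{1}{[\text{\textlonglegr}]_{\text{\texthtq}_{\text{\textlonglegr}}}}
\le \frac{4}{\text{\texthtq}_{\text{\textlonglegr}}[\text{\textlonglegr}]_{\text{\texthtq}_{\text{\textlonglegr}}}}
=4\delta_{\text{\textlonglegr}}^{2},
\]
so with $\delta=\delta_{\text{\textlonglegr}}$ the bracket is at most $1+2=3$, uniformly in $x$. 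The remaining factor $\sqrt{\mathcal{S}_{\text{\textlonglegr},s,\text{\texthtq}_{\text{\textlonglegr}}}(\mu_{x,\alpha}^2;x)/\rho_\alpha(x)}$ is then declared to be the constant $C_\alpha$; your more careful argument via Lemma \ref{Lemma 3.1} and Remark \ref{Rmk 3.2} that this is finite is in fact a genuine improvement over the paper, which simply names it without justification.
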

	
	\begin{proof}
		From Lemma \ref{Lemma 2.1}, we have the following:
		\begin{equation*}
			\begin{aligned}
				\mathcal{S}_{\text{\textlonglegr},s,\text{\texthtq}_{\text{\textlonglegr}}}(\phi_{x}^{2};x) & =  \left(  x^{2} + x\dfrac{1}{\left[ \text{\textlonglegr}\right] _{\text{\texthtq}_{\text{\textlonglegr}}}}\left( 1 + \dfrac{1}{\text{\texthtq}_{\text{\textlonglegr}}}x\right)\right)  - x^{2},\\
				& =  \left( 1 +\dfrac{1}{\text{\texthtq}_{\text{\textlonglegr}}\left[ \text{\textlonglegr}\right] _{\text{\texthtq}_{\text{\textlonglegr}}}} - 1\right)  x^{2} + x\dfrac{1}{\left[ \text{\textlonglegr}\right] _{\text{\texthtq}_{\text{\textlonglegr}}}},\\
				& = \dfrac{1}{\text{\texthtq}_{\text{\textlonglegr}}\left[ \text{\textlonglegr}\right] _{\text{\texthtq}_{\text{\textlonglegr}}}}  x^{2} + \dfrac{1}{\left[ \text{\textlonglegr}\right] _{\text{\texthtq}_{\text{\textlonglegr}}}}x.
			\end{aligned}
		\end{equation*}
		Consequently, we have the inequality:
		\begin{align}
			\mathcal{S}_{\text{\textlonglegr},s,\text{\texthtq}_\text{\textlonglegr}%
			}(\phi_{x}^{2};x) \leq \dfrac{1}{\text{\texthtq}_{\text{\textlonglegr}}\left[
				\text{\textlonglegr}\right] _{\text{\texthtq}_{\text{\textlonglegr}}}} x^{2}
			+ \dfrac{3}{\left[ \text{\textlonglegr}\right] _{\text{\texthtq}_{\text{%
							\textlonglegr}}}}x.  \label{In 3.3}
		\end{align}
		Let $\alpha \geq 0$ a constant and $\text{\textg} \in B_{\rho\alpha}(\mathbb{%
			R_{+}})$. Using Theorem \ref{Theorem 3.3} as well as the inequality (\ref{In 3.3})
		above, we may express the following:
		\begin{equation*}
			\begin{aligned}
				\lim\limits_{\text{\textlonglegr}}\parallel (\mathcal{S}_{\text{\textlonglegr},s,\text{\texthtq}} \text{\textg})(x) - \text{\textg}(x)\parallel_{\rho_{\alpha}}  & = \dfrac{\left| (\mathcal{S}_{\text{\textlonglegr},s,\text{\texthtq}} \text{\textg})(x) - \text{\textg}(x)\right|}{1+x^{2 + \alpha}},\\
				&  \leq \sqrt{\dfrac{\mathcal{S}_{\text{\textlonglegr},s,\text{\texthtq}}(\mu_{x,\alpha}^{2};x)}{1+x^{2+\alpha}}}\left( 1 + \dfrac{1}{\delta} \sqrt{\dfrac{\mathcal{S}_{\text{\textlonglegr},s,\text{\texthtq}}(\phi_{x}^{2};x)}{1+x^{1+\alpha}}} ~~\right) \omega_{\rho_{\alpha}}(\text{\textg}; \delta),  \\
				& \leq \sqrt{\dfrac{\mathcal{S}_{\text{\textlonglegr},s,\text{\texthtq}}(\mu_{x,\alpha}^{2};x)}{1+x^{2+\alpha}}}\left( 1 + \dfrac{1}{\delta} \sqrt{\left| \dfrac{1}{\text{\texthtq}_{\text{\textlonglegr}}\left[ \text{\textlonglegr}\right] _{\text{\texthtq}_{\text{\textlonglegr}}}}  x^{2} + \dfrac{3}{\left[ \text{\textlonglegr}\right] _{\text{\texthtq}_{\text{\textlonglegr}}}}x\right| } ~~\right), \\
				& \times \omega_{\rho_{\alpha}}(\text{\textg}; \delta),\\
				& \leq \sqrt{\dfrac{\mathcal{S}_{\text{\textlonglegr},s,\text{\texthtq}}(\mu_{x,\alpha}^{2};x)}{1+x^{2+\alpha}}}\left( 1 + \dfrac{1}{\delta} \sqrt{ \dfrac{1}{\text{\texthtq}_{\text{\textlonglegr}}\left[ \text{\textlonglegr}\right] _{\text{\texthtq}_{\text{\textlonglegr}}}}  \parallel e_{2}\parallel _{\rho_{\alpha}} + \dfrac{3}{\left[ \text{\textlonglegr}\right] _{\text{\texthtq}_{\text{\textlonglegr}}}}\parallel e_{2}\parallel _{\rho_{\alpha}} } ~~\right)\\
				& \times \omega_{\rho_{\alpha}}(\text{\textg}; \delta).\\
			\end{aligned}
		\end{equation*}
		Furthermore,
		\begin{equation*}
			\begin{aligned}
				\lim\limits_{\text{\textlonglegr}}\parallel (\mathcal{S}_{\text{\textlonglegr},s,\text{\texthtq}_{\text{\textlonglegr}}} \text{\textg})(x) - \text{\textg}(x)\parallel_{\rho_{\alpha}} & \leq \sqrt{\dfrac{\mathcal{S}_{\text{\textlonglegr},s,\text{\texthtq}}(\mu_{x,\alpha}^{2};x)}{1+x^{2+\alpha}}}\left( 1 + \dfrac{2}{\delta} \sqrt{ \dfrac{1}{\text{\texthtq}_{\text{\textlonglegr}}\left[ \text{\textlonglegr}\right] _{\text{\texthtq}_{\text{\textlonglegr}}}}} ~~\right)\omega_{\rho_{\alpha}}(\text{\textg}; \delta),\\
				& \leq \parallel \mathcal{S}_{\text{\textlonglegr},s,\text{\texthtq}}(\mu_{x,\alpha}^{2};x)\parallel _{\delta\alpha} \left( 1 + \dfrac{2}{\delta} \sqrt{ \dfrac{1}{\text{\texthtq}_{\text{\textlonglegr}}\left[ \text{\textlonglegr}\right] _{\text{\texthtq}_{\text{\textlonglegr}}}}} ~~ \right)\omega_{\rho_{\alpha}}(\text{\textg}; \delta).
			\end{aligned}
		\end{equation*}
		Let $C_{\alpha}= \parallel \mathcal{S}_{\text{\textlonglegr},s,\text{\texthtq%
		}}(\mu_{x,\alpha}^{2};x)\parallel_{\delta\alpha}$ and choose $\delta = \sqrt{%
			\dfrac{1}{\text{\texthtq}_{\text{\textlonglegr}}\left[ \text{\textlonglegr} %
				\right] _{\text{\texthtq}_{\text{\textlonglegr}}}}}$, we have:
		\begin{equation*}
			\begin{aligned}
				\lim\limits_{\text{\textlonglegr}}\parallel (\mathcal{S}_{\text{\textlonglegr},s,\text{\texthtq}_{\text{\textlonglegr}}} \text{\textg})(x) - \text{\textg}(x)\parallel_{\rho_{\alpha}} \leq 3C_{\alpha}\omega_{\rho_{\alpha}}(\text{\textg}; \delta_{\text{\textlonglegr}}).
			\end{aligned}
		\end{equation*}
	\end{proof}
	
	\begin{remark}
		Since $(\text{\texthtq}_{\text{\textlonglegr}})_{\text{\textlonglegr}}$
		satisfy Remark \ref{Rmk 3.1}, then the  sequence $(\delta_{\text{%
				\textlonglegr}})_{\text{\textlonglegr}}$ is statistically null, that is $st
		-  \lim\limits_{\text{\textlonglegr}}\omega_{\rho_{\alpha}}(\text{\textg};
		\delta_{\text{\textlonglegr}}) = 0$. Therefore,  Theorem \ref{Theorem 3.4}
		above gives the statistical rate of convergence of  $\mathcal{S}_{\text{%
				\textlonglegr},s,\text{\texthtq}_{\text{\textlonglegr}}}(x)$ to \text{\textg}%
		.
	\end{remark}

	\section{Graphical analysis}
	Using computer software, we will demonstrate some numerical examples with illustrative graphics.
	\begin{example}\label{eg1}
		Let $\text{\textg}(x)=(x-\frac{1}{5})(x-\frac{4}{9})$, $\text{\texthtq}=0.95$ and $n\in\{10,30,80\}$.
		The convergence of the operator towards the function $\text{\textg}(x)$ is shown in Figure \ref{fig1}.

	\begin{figure}[H]
		
		\includegraphics{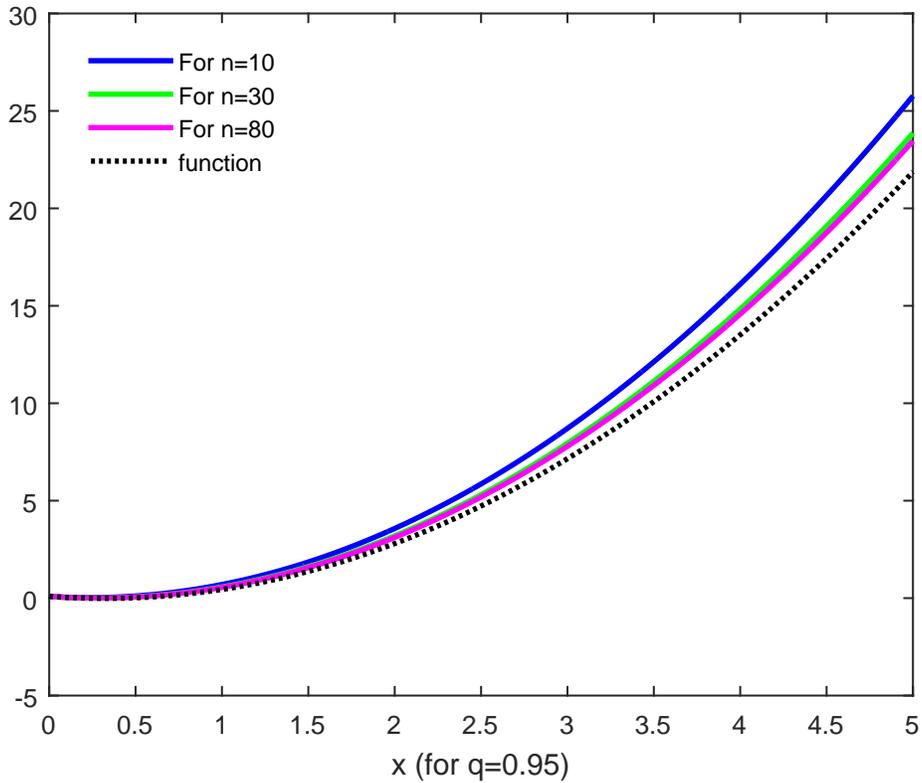}
		
		\caption{convergence of the operator towards the function $\text{\textg}(x)=(x-\frac{1}{5})(x-\frac{4}{9})$}
		\label{fig1}
	\end{figure}
\end{example}

	\begin{example}\label{eg2}
		Let $\text{\textg}(x)=x^2-1$, $\text{\texthtq}=1$ and $n\in\{10,30,60\}$.
		The convergence of the operator towards the function $\text{\textg}(x)$ is shown in Figure \ref{fig2}.

	\begin{figure}[H]
		
		\includegraphics{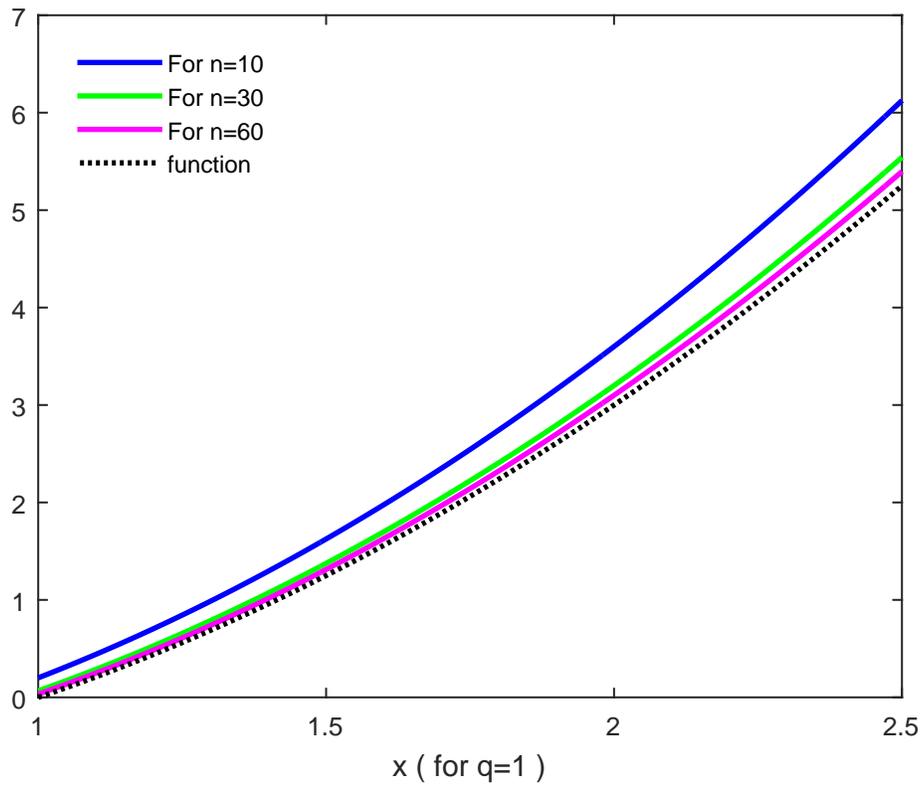}
		
		\caption{convergence of the operator towards the function $\text{\textg}(x)=x^2-1$}
		\label{fig2}
	\end{figure}
\end{example}
\newpage
\begin{example} \label{eg3}
		Let $f(x)=x^2-4x+3$. For $n=50$ and different values of \text{\texthtq},
		the convergence of the operator towards the function $f(x)$ is shown in Figure \ref{fig3}.

	\begin{figure}[H]
		
		\includegraphics{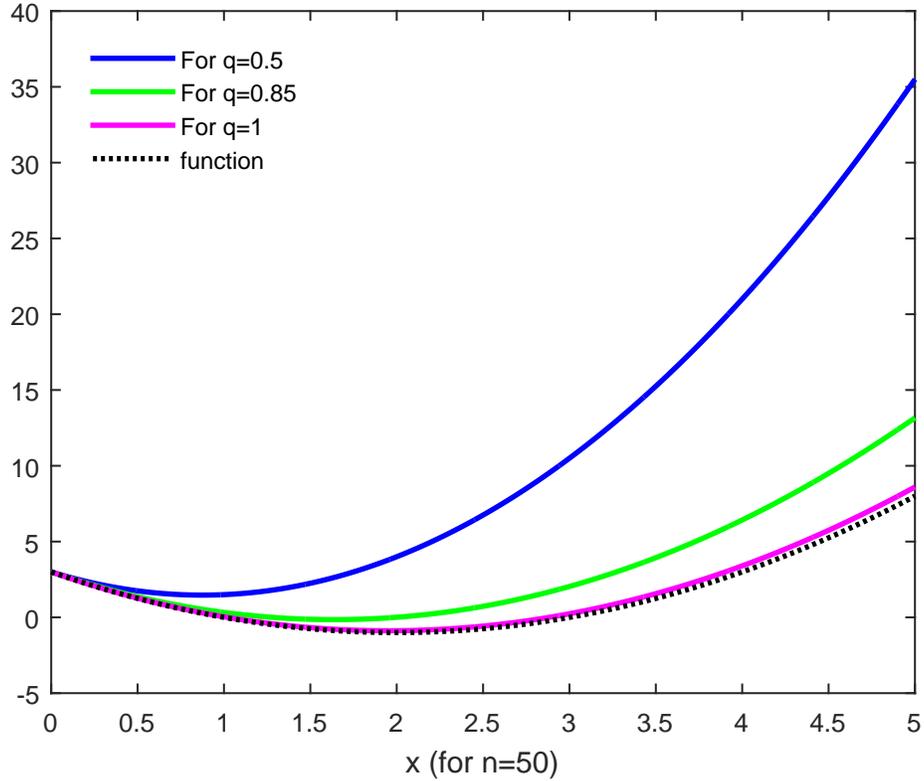}
		
		\caption{Convergence of the operator for different values of \text{\texthtq}}
		\label{fig3}
	\end{figure}
\end{example}

	\section{Conclusion}

With the facilitation of Bohman Korovkin-type theorem, the investigation on weighted statistical approximation behavior of Wavelets Kantorovich \text{\texthtq}-Baskakov operators $\mathcal{S}_{\text{\textlonglegr},s,\text{\texthtq}}$ is discussed under this study.
Moreover, the convergence statistical rate with respect to the operators $%
\mathcal{S}_{\text{\textlonglegr},s,\text{\texthtq}}$ is provided in this research with regard to the weighted modulus of smoothness correlated to the space $B_{\rho\alpha}(\mathbb{R_{+}})$. The statistical approximation properties discussed in this study are the same as those of classical \text{\texthtq}-Baskakov operators defined by equation (\ref{1.1}) since they share the same moments. \newline

	\section*{Declarations}
	\subsection*{Ethical Approval} Not Applicable
	\subsection*{Availability of supporting data} Not Applicable
	\subsection*{Competing interests} Not Applicable
	\subsection*{Funding} Not Applicable
	\subsection*{Acknowledgments}	Not Applicable

\end{document}